\title{Explicit Vologodsky Integration for Hyperelliptic Curves}
\author{Enis Kaya}
\address{Enis Kaya, Max Planck Institute for Mathematics in the Sciences, Inselstrasse 22, 04103, Leipzig, Germany}
\email{enis.kaya@mis.mpg.de}
\newcommand{\V}{{\operatorname{Vol}}}
\newcommand{\BC}{{\operatorname{BC}}}
\newcommand{\lV}{{\operatorname{Vol\ }}}
\newcommand{\lBC}{{\operatorname{BC\ }}}
\newcommand{\dR}{{\operatorname{dR}}}
\def\presuper#1#2%
\newcommand{\defi}[1]{\textbf{#1}} 
\newcommand\BCint{\presuper\BC\int}
\newcommand\lBCint{\presuper\lBC\int}
\newcommand\Vint{\presuper\V\int}
\newcommand\lVint{\presuper\lV\int}
\newcommand\tint{\presuper{t}\int}
\newcommand\ltint{\presuper{t\ }\int}
\DeclareMathOperator{\Log}{Log}
\begin{document}

\begin{abstract} Vologodsky's theory of $p$-adic integration plays a central role in computing several interesting invariants in arithmetic geometry. In contrast with the theory developed by Coleman, it has the advantage of being insensitive to the reduction type at $p$. Building on recent work of Besser and Zerbes, we describe an algorithm for computing Vologodsky integrals on bad reduction hyperelliptic curves. This extends previous joint work with Katz to all meromorphic differential forms. We illustrate our algorithm with numerical examples computed in Sage.
\end{abstract}

\maketitle
\setcounter{tocdepth}{1}
\tableofcontents

\section{Introduction}

Coleman integration \cite{coleman1982dilogarithms,coleman_shalit88:p_adic_regulator,Bes02} is a method for defining $p$-adic iterated integrals on rigid analytic spaces associated to varieties with good reduction at $p$. Vologodsky integration \cite{vologodsky2003hodge} also produces such integrals on varieties, but it does not require that the varieties under consideration have good reduction at $p$. These two integration theories are both path-independent and they are known to be the \emph{same} in the case of good reduction. Therefore, Vologodsky integration is, in a sense, a generalization of Coleman integration to the bad reduction case.

On the other hand, by a cutting-and-pasting procedure, one can define in a natural way integrals on curves\footnote{In what follows we will only consider curves, not higher-dimensional varieties.} of bad reduction. More precisely, one can cover such a curve by \emph{basic wide open spaces}, certain rigid analytic spaces, each of which can be embedded into a curve of good reduction; and by performing the Coleman integrals there, one can piece together an integral. This naive integral, which will be referred to as the Berkovich--Coleman integral (see Section~\ref{Berk-Col-int-section}), is generally path-dependent and hence disagrees with the Vologodsky integral. 

As described below, Coleman and Vologodsky integration have numerous applications in arithmetic geometry, some of which rely on explicitly computing integrals. There are practical algorithms in the good reduction case when $p$ is odd: see \cite{BBKExplicit,BBComputing,balakrishnan2015coleman,best2019explicit} (single integrals on hyperelliptic curves), \cite{balakrishnan2013iterated,balakrishnan2015coleman} (iterated integrals on hyperelliptic curves), and \cite{balakrishnan2020explicit} (single integrals on smooth curves). All of this relies on an algorithm for explicitly computing the action of Frobenius on $p$-adic cohomology in order to realize the principle of \emph{analytic continuation along Frobenius}: see \cite{kedlaya2001counting,harvey2007kedlayas,harrison2012extension} (hyperelliptic curves), and \cite{Tuitman:counting1,Tuitman:counting2} (smooth curves). 

On the other hand, Vologodsky integration has been, so far, difficult to compute. The main obstacle is that Vologodsky's original construction is not quite suitable for machine computation. Nevertheless, in joint work with Katz \cite{katz2020padic}, we proved that ($p$-adic) abelian integration\footnote{Thanks to Zarhin and Colmez \cite{zarhin96:abelian_integral,colmez1998integration}, single Vologodsky integrals were known before Vologodsky's work; the case of holomorphic forms is referred to as abelian integration.} on curves with bad reduction \emph{reduces} to Berkovich--Coleman integration (see \cite[Theorem~3.16]{katz2020padic} for a precise formulation); and this result, when combined with the Coleman integration algorithms, allowed us to give an algorithm for computing abelian integrals on bad reduction hyperelliptic curves when $p\neq 2$ (see \cite[Algorithm~7]{katz2020padic}). Based on this algorithm, we provided several numerical examples carried out using Sage (see \cite[Section~9]{katz2020padic}).

The paper \cite{katz2020padic}, to the best of our knowledge, is the first attempt in the literature to compute $p$-adic integrals in the bad reduction case. Yet, it has the drawback that it only deals with holomorphic forms; however, for some applications such as $p$-adic heights on curves, it is necessary to work with more general meromorphic forms. In the present paper, we extend the techniques of \emph{loc. cit.} in a natural way to also cover meromorphic forms, building on the recent work of Besser and Zerbes \cite{BesserZerbes} which \emph{relates} single Vologodsky integration on bad reduction curves to Coleman primitives. In particular, we present an algorithm for computing single Vologodsky integrals on bad reduction hyperelliptic curves for $p\neq 2$ (Algorithm~\ref{integrationalgorithm}). For a fixed hyperelliptic curve $X$ with affine model $y^2 = f(x)$, the algorithm roughly proceeds in the following steps:
\begin{enumerate}
    \item Based on a comparison formula, which follows from the work of Besser and Zerbes \cite{BesserZerbes}, we reduce the problem to the computation of certain Berkovich--Coleman integrals on $X$ (Theorem~\ref{mainthm}).
    \item By viewing $X$ as a double cover of the projective line $\P^1$ and examining the \emph{roots} of $f(x)$, we construct a covering $\cD$ of the curve $X$ by basic wide open spaces (Section~\ref{Coverings}).\footnote{This is essentially equivalent to constructing a semistable model of $X$.}
    \item Using additivity of Berkovich--Coleman integrals under concatenation of paths, we reduce the computation of the integrals in Step~(1) to the computation of various Coleman integrals on certain elements of $\cD$. In order to compute these integrals, we first embed the elements of $\cD$ of interest into good reduction hyperelliptic curves. Then, by a pole reduction argument, we rewrite the corresponding differentials in simplified forms. Finally, we make use of the known Coleman integration algorithms (Section~\ref{BC-int-hypcur}).
\end{enumerate}
Based on this algorithm, we give two numerical examples computed in Sage (Section~\ref{NumericalExamples}).

We note that Sage includes implementations of the Coleman integration algorithms in \cite{BBKExplicit, BBComputing,balakrishnan2015coleman}. In fact, this is the main reason why we use Sage. Implementing our algorithm in Sage, however, seems out of reach at present, even if we take the base field to be $\Q_p$. Here are two main reasons:
\begin{itemize}
    \item The implementations mentioned above assume that the curve in question is defined over $\Q_p$, but the curves we get in Step~(3) above might be defined over non-trivial finite extensions of $\Q_p$.
    \item In Sage, one can define and work with Eisenstein and unramified extensions separately; but, neither conversion between these extensions nor general extensions are available. In our approach, however, it may be necessary to work with several finite extensions of $\Q_p$ at the same time.
\end{itemize}

In a sense, the present paper is a sequel to \cite{katz2020padic}. Indeed, our new comparison formula and algorithm are slight modifications of the corresponding ones in \cite{katz2020padic}; therefore, we frequently refer to \cite{katz2020padic}. On the other hand, the present paper provides an additional result (Section~\ref{Error Bounds}); this result bounds the error term in the method for computing Berkovich--Coleman integrals, which involves an expansion in an infinite series. The main contributions are in Sections~\ref{ComparisonofTheIntegrals}, \ref{BC-int-hypcur}, and \ref{TheAlgorithm}.

\subsection{Applications} It is worth highlighting a few of the numerous applications of explicit Vologodsky integration. Let $X$ be a smooth, proper, and geometrically connected curve over the field $\Q$ of rational numbers of genus $g\geq 1$. For brevity, the base field is $\Q$; but everything we will say admits a generalization to a number field. We asssume, for simplicity, that $X$ has a $\Q$-rational point. Let $p$ be an odd prime.

\subsubsection*{$p$-adic height pairings} When the curve $X$ has good reduction at $p$, Coleman and Gross \cite{coleman1989} gave a construction of a $p$-adic height pairing on $X$ which is, by definition, a sum of local height pairings at each prime number. The local components away from $p$ are described using arithmetic intersection theory, and the local component at $p$ is given in terms of the Coleman integral of a non-holomorphic differential. Besser \cite[Theorem~1.1]{Bes04} showed that the Coleman--Gross pairing is \emph{equivalent} to the $p$-adic height pairing of Nekov\'a\v r \cite{Nek93}.

In another direction, replacing Coleman integration in the recipe above by Vologodsky integration, Besser \cite[Definition~2.1]{BesserPairing} gave an extended definition of the Coleman--Gross pairing on $X$ without any assumptions on the reduction at $p$. He also proved that this new pairing is still \emph{equivalent} to the one defined by Nekov\'a\v r when the reduction is semistable; see \cite[Theorem~1.1]{BesserPairing}.\footnote{In the literature, there are several definitions of $p$-adic height pairings; we restrict attention to those that are relevant to our setting.}

In the case that $X$ is a hyperelliptic curve with good reduction at $p$, an algorithm to compute the local height pairing at $p$ was provided in Balakrishnan--Besser \cite{BBComputing}. The techniques of the current article make it possible to remove the good reduction assumption from this setting and we are currently working on this with M{\"u}ller, but see Example~\ref{ex1} for an illustration in the elliptic curve case.

\subsubsection*{$p$-adic regulators} A $p$-adic analogue of the conjecture of Birch and Swinnerton--Dyer (BSD) for an elliptic curve over $\Q$ was given in Mazur--Tate--Teitelbaum (MTT) \cite{mazur1986p} when $p$ is a prime of good ordinary or multiplicative reduction, with the canonical regulator replaced by a $p$-adic regulator. Balakrishnan, M{\"u}ller, and Stein \cite{balakrishnan2016p} formulated a generalization of the MTT conjecture in the good ordinary case to higher dimensional (modular) abelian varieties over $\Q$, where the $p$-adic regulator is given in terms of Nekov\'a\v r's height pairing \cite{Nek93}. They also provided numerical evidence supporting their conjecture for Jacobians of hyperelliptic curves. This involves computing Coleman--Gross pairings on hyperelliptic curves, and, as explained above, the $p$-part of such a computation is based on \cite{BBComputing}.

The MTT conjecture in the case of split multiplicative reduction, the \emph{exceptional} case, is of special interest. One might expect that a generalization of this conjecture to higher dimensional abelian varieties over $\Q$ in the case of split purely toric reduction can be formulated. In the spirit of the preceding paragraph, formulating such a conjecture, as well as gathering numerical evidence for it, require the computation of extended Coleman--Gross pairings on hyperelliptic curves. The forthcoming work with M{\"u}ller mentioned above will handle the $p$-part of such a computation.

\subsubsection*{(Non-abelian) Chabauty} Hereafter, we consider the case where $g\geq 2$. By Faltings' theorem, the set $X(\Q)$ of rational points on $X$ is known to be finite; however, his proof is ineffective and at present no general algorithm for the computation of $X(\Q)$ is known. 

The method of Chabauty--Coleman \cite{mccallum2012method} is a $p$-adic method that attempts to \emph{determine} $X(\Q)$ under the condition that $r<g$, where $r$ is the Mordell--Weil rank of the Jacobian of $X$. This method relies heavily on computing abelian integrals on $X$. In the special case that $X$ is a hyperelliptic\footnote{In fact, the recent work of Balakrishnan--Tuitman \cite{balakrishnan2020explicit} overcame the hyperelliptic restriction.} curve having good reduction at $p$, this can be achieved using the algorithms developed by Balakrishnan--Bradshaw--Kedlaya and Balakrishnan in \cite{BBKExplicit,balakrishnan2015coleman}. For the case in which $X$ does not necessarily have good reduction at $p$ (but is still hyperelliptic), the techniques developed in \cite{katz2020padic} can be employed; see Example~9.4 of \emph{loc. cit.} for an illustration.

The hypothesis that $r<g$ in the Chabauty--Coleman method plays an essential role. An approach to circumvent this limitation is Kim's non-abelian Chabauty \cite{kim05:motivic_fundamental_group,kim09:unipotent_albanese,kim2010massey}, of which quadratic Chabauty is a special case. There is great interest in making the quadratic Chabauty method explicit; see, for example, \cite{balakrishnan2016quadratic,balakrishnan2017computing,balakrishnan2018,balakrishnandogra2021quadratic,balakrishnansplitcartan,bianchi2020quadratic,balakrishnan2021explicit,balakrishnan2021quadratic}. 

One of the crucial steps in carrying out non-abelian Chabauty is to compute iterated integrals. Balakrishnan \cite{balakrishnan2013iterated,balakrishnan2015coleman} gave algorithms to compute double Coleman integrals on good reduction hyperelliptic curves. Combining our techniques with the forthcoming work of Katz and Litt \cite{katzlitt}, one should be able to compute iterated Vologodsky integrals on semistable hyperelliptic curves. The quadratic Chabauty method also requires the computation of $p$-adic heights, which we have already discussed.

\subsection{Why Bad Primes?} Why, one might ask, are primes of bad reduction interesting or useful? It would be hopeless to give a complete answer to this philosophical question here; however, we will make a few comments.

\subsubsection*{Local problems} Techniques developed in order to deal with $p$-adic problems (i.e., problems whose objects are defined over $p$-adic fields) depend, in general, on the \emph{nature} of the prime $p$. One striking example of this phenomenon is the $p$-adic BSD conjecture for elliptic curves. For different reduction types, this conjecture is (in general) of a quite different nature and of interest in its own right; see Stein--Wuthrich \cite{stein2013algorithms}.

\subsubsection*{Global problems} We sometimes study a problem over $\Q$ by studying it over $\Q_p$ for a fixed prime $p$. A \emph{good} prime is often more convenient to work with; however, for practical purposes, we may need to take $p$ as small as possible and this additional condition might force us to work with a \emph{bad} prime. As an illustration, we may look at \cite[Example~5.1]{katz_dzb13:chabauty_coleman}, in which Katz--Zureick-Brown showed, for a certain curve $X/\Q$ with bad reduction at $5$, that $5$ is the only prime that allows to compute $X(\Q)$ using Coleman's upper bound for $\#X(\Q)$ \cite{coleman85:effective_chabauty} and/or its refinements \cite{lorenzin_tucker02:chabauty_coleman,stoll_2006,katz_dzb13:chabauty_coleman}.

A related topic is the uniformity conjecture \cite{caporaso1997uniformity}. This conjecture is one of the outstanding conjectures in Diophantine geometry, and asserts that there exists a constant $B(\Q,g)$ such that every smooth curve $X/\Q$ of genus $g\geq 2$ has at most $B(\Q,g)$ rational points. The first result along these lines is due to Stoll \cite{stoll2018uniform}, who proved uniform bounds for hyperelliptic curves of small rank; his result was later generalized by Katz, Rabinoff, and Zureick-Brown \cite{krzb15:uniform_bounds} to arbitrary curves of small rank. See also the paper of Kantor \cite{kantor2017rankfavorable}. Here, to get a bound \emph{independent of the geometry of curves}, it is necessary to work with primes of bad reduction.

\subsection{Outline} First, in Section 2, we introduce some notation and recall several basic facts that we will need. In Section 3, after recalling Berkovich--Coleman and Vologodsky integration, we prove a result comparing the two. Section 4 summarizes how to construct coverings of hyperelliptic curves. In Section 5, we explain how to compute Berkovich--Coleman integrals on hyperelliptic curves, and we describe in Section 6 the algorithm for computing Vologodsky integrals on such curves. Finally, in Section 7 we conclude with numerical examples illustrating our methods.

\subsection*{Acknowledgements} We would very much like to thank Steffen M{\"u}ller for a continuous stream of discussions regarding this work. Besides, we thank Jennifer Balakrishnan, Amnon Besser, Francesca Bianchi, Ayhan Dil, Netan Dogra, Stevan Gajovi\'c, Eric Katz and Ken McMurdy for helpful conversations, either directly or by email. We also would like to thank the anonymous referee for several useful suggestions. The author was partially supported by NWO grant 613.009.124.

\section{Preliminaries}

Throughout we work with a fixed odd prime number $p$. Let $\C_p$ denote the completion of an algebraic closure of the field of $p$-adic numbers $\Q_p$. Let $v_p$ be the valuation on $\C_p$ normalized so that $v_p(p)=1$; it corresponds to the absolute value $\|\cdot\|_p$ where $\|\cdot\|_p=p^{-v_p(\cdot)}$. Let $\bK$ denote a finite extension of $\Q_p$ with ring of integers $\mathcal{O}_{\bK}$ and residue field $\bk$.

\subsection{Rigid Analysis} We will make use of several concepts from rigid analytic geometry. Some standard references are the book of Bosch--G\"untzer--Remmert \cite{bosch_guntzer_remmert84:non_archimed_analysis} and the book of Fresnel--van der Put \cite{fresnel_put04:rigid_analytic_geometry} (see also Schneider \cite{schneider_1998} for a quick introduction).

We will use the marker ``an'' to denote rigid analytification; in particular, $\A^{1,\an}$ denotes the rigid affine line. Set
\begin{eqnarray*}
B(a,r)&=&\{z\in\A^{1,\an} \mid \|z-a\|_p<r\},\ \ \ r>0;\\
\overline{B}(a,r)&=&\{z\in\A^{1,\an} \mid \|z-a\|_p\leq r\},\ \ \ r>0.
\end{eqnarray*} 
An open (resp. closed) disc is a rigid space isomorphic to some $B(a,r)$ (resp. $\overline{B}(a,r)$). An (open) annulus is one isomorphic to
\[\{z\in\A^{1,\an} \mid s<\|z-a\|_p<r\},\ \ \ 0<s<r.\]

A \defi{wide open space} is a rigid analytic space isomorphic to the complement in a smooth complete connected curve of finitely many closed discs. Examples of wide opens include open discs and annuli. A wide open space is \defi{basic} if it is isomorphic to the complement in a good reduction complete connected curve of finitely many closed discs, each of which is contained in a distinct residue disc. These spaces are the building blocks of Coleman's theory. In this paper, two types of basic wide opens will be particularly important. We call a basic wide open space \defi{rational} (resp. \defi{hyperelliptic}) if it lies in the rigid analytification of $\P^1$ (resp. a hyperelliptic curve).

\subsection{Differential Forms} Let $X/\bK$ be a smooth, proper, geometrically connected curve of genus $g$. A meromorphic $1$-form on $X$ over $\bK$ is said to be of the \defi{first kind} if it is holomorphic, and of the \defi{second kind} if it has residue $0$ at every point. For instance, exact differentials, i.e., differentials of rational functions, are of the second kind. The differentials of the second kind modulo exact differentials form a $2g$-dimensional $\bK$-vector space. It is canonically isomorphic to the first algebraic de Rham cohomology $H_{\dR}^1(X/\bK)$ of $X/\bK$, which is the first hypercohomology group of the de Rham complex
\[0\rightarrow \mathcal{O}_X \rightarrow \Omega_{X/\bK}^1\rightarrow 0\]
on $X/\bK$. We have a canonical exact sequence
\[0\rightarrow H^0(X,\Omega_{X/\bK}^1) \rightarrow H_{\dR}^1(X/\bK) \rightarrow H^1(X,\mathcal{O}_X) \rightarrow 0.\]
We identify the space $H^0(X,\Omega_{X/\bK}^1)$ of differentials of the first kind with its image; it is $g$-dimensional and we denote it by $H_{\dR}^{1,0}(X/\bK)$.

We say a meromorphic $1$-form on $X$ over $\bK$ is of the \defi{third kind} if it is regular, except possibly for simple poles with integer residues. The logarithmic differentials, i.e., those of the form $df/f$ for $f\in \bK(X)^{\times}$, are of the third kind. Let $T(\bK)$ denote the subgroup of differentials of the third kind and $\operatorname{Div}_{\bK}^0(X)$ the group of divisors of degree zero on $X$ over $\bK$. The residual divisor homomorphism
\[\Res\colon T(\bK)\to \operatorname{Div}_{\bK}^0(X),\ \ \ \nu\mapsto \sum_P \Res_P \nu \cdot (P)\]
where the sum is taken over closed points of $X$, gives rise to the following exact sequence:
\begin{equation} \label{resdivhom}
    0\rightarrow H_{\dR}^{1,0}(X/\bK) \rightarrow T(\bK) \rightarrow \operatorname{Div}_{\bK}^0(X) \rightarrow 0.
\end{equation}

\section{$p$-adic Integration Theories}
\label{integrationsection}

In this section, we define Vologodsky and Berkovich--Coleman integration. We also give a formula for passing between them when the underlying space is a curve; this generalizes \cite[Theorem~3.16]{katz2020padic} in which only holomorphic $1$-forms are considered.

Choose once and for all a branch of the $p$-adic logarithm, i.e., a homomorphism
\[ \Log \colon \C_p^\times \To \C_p \] given by the Mercator series $\text{Log}(1+z)=z-\frac{z^2}{2}+\frac{z^3}{3}-\cdots$ when $\|z\|<1$. A branch is determined by specifying $\text{Log}(p)$ in $\C_p$. 

\subsection{Vologodsky Integration} For a smooth, geometrically connected algebraic $\bK$-variety $X$, we let $Z^1_{\dR}(X)$ denote the space of closed $1$-forms on $X$.

\begin{thm} There is a unique way to construct, for every smooth, geometrically connected algebraic $\bK$-variety $X$, every $\omega\in Z^1_\dR(X)$ and every pair of points $x,y\in X(\bK)$, an integral
\[\int_x^y \omega \in \bK\]
such that the following are true:
\begin{enumerate}
    \item If $x_0\in X(\bK)$, then
    \[F\colon X(\bK)\to \bK, \ \ \ x\mapsto \int_{x_0}^x \omega\]
    is a locally analytic function satisfying $dF = \omega$.
    \item If $\omega_1,\omega_2\in Z^1_\dR(X)$ and $c_1,c_2\in \bK$, then
    \[\int_x^y (c_1\omega_1+c_2\omega_2) = c_1\int_x^y \omega_1 + c_2\int_x^y \omega_2.\]
    \item If $x,y,z\in X(\bK)$, then 
    \[\int_x^z \omega = \int_x^y \omega + \int_y^z \omega.\]
    \item If $f$ is a rational function on $X$, then
    \[\int_x^y df = f(y)-f(x)\]
    provided that $f$ is defined on the endpoints.
    \item If $f$ is a non-zero rational function on $X$, then
    \[\int_x^y \frac{df}{f} = \Log\left(\frac{f(y)}{f(x)}\right)\]
    provided that $f$ is defined and non-zero on the endpoints.
    \item If $h\colon X\to Y$ is a $\bK$-morphism, $x,y\in X(\bK)$ and $\omega\in Z^1_\dR(Y)$, then
    \[\int_x^y h^*\omega = \int_{h(x)}^{h(y)} \omega.\]
\end{enumerate}
\end{thm}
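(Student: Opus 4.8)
The plan is to establish existence and uniqueness separately, following the now-standard approach of constructing Vologodsky integration via the theory of unipotent connections and the de Rham fundamental group, since the axioms $(1)$–$(5)$ are precisely the defining properties of that theory. The key observation is that a closed $1$-form $\omega \in Z^1_{\dR}(X)$ gives rise, together with the trivial connection, to a unipotent vector bundle with connection on $X$ of rank $2$: concretely, the connection on $\mathcal{O}_X^{\oplus 2}$ given by $\nabla \binom{f}{g} = \binom{df - g\omega}{dg}$, whose horizontal sections encode primitives of $\omega$. Thus the construction reduces to producing a canonical (path-independent) notion of parallel transport for unipotent connections, which is exactly what Vologodsky's Tannakian/Hodge-theoretic machinery provides.

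For \emph{uniqueness}, I would argue that properties $(1)$–$(5)$ pin down the integral completely. First, on an open disc or more generally on a space where $\omega$ has a global rational primitive, property $(4)$ (or the $dF=\omega$ condition in $(1)$ together with the fact that locally analytic functions with zero derivative are locally constant, forcing the constant to be $0$ at $x_0$) determines $\int \omega$. Functoriality $(5)$ and linearity $(2)$ then propagate the value along morphisms — in particular one reduces general $X$ to $\mathbb{P}^1$ or to affine pieces via pullback, and one reduces a general closed form to the building blocks $df$ and $df/f$ using the fact, recalled in \eqref{resdivhom} and the surrounding discussion, that differentials of the second kind are classified by $H^1_{\dR}$ and differentials of the third kind by their residue divisors. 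The additivity $(3)$ and the normalization $\int_x^x \omega = 0$ (forced by $(3)$) then fix the constants of integration globally. Chaining these reductions shows any two theories satisfying $(1)$–$(5)$ agree on all inputs.

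For \emph{existence}, I would invoke Vologodsky's construction \cite{vologodsky2003hodge}: one builds, functorially in $X$, a canonical fiber functor comparison on the category of unipotent isocrystals (equivalently, unipotent connections), using the Hodge filtration and Frobenius structure to rigidify parallel transport; the choice of $\Log(p)$ enters exactly at the point where one must split the weight/Frobenius filtration on the relevant extension classes. Granting that this produces well-defined, path-independent parallel transport operators, properties $(2)$ and $(3)$ are formal consequences of functoriality in the connection and the groupoid structure of parallel transport; property $(5)$ is functoriality in $X$; property $(1)$ — local analyticity and $dF = \omega$ — follows because Vologodsky transport locally agrees with naive (Berkovich–Coleman) transport, which manifestly solves the differential equation; and property $(4)$ is a direct computation with the rank-$2$ connections attached to $df$ and $df/f$, where the logarithm appears precisely because the monodromy/period of $d f / f$ around the divisor of $f$ is governed by $\Log$.

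\textbf{The main obstacle} is genuinely the existence half, and specifically the path-independence: naive iterated integration along Berkovich paths is path-dependent in bad reduction (this is the whole point of the paper), so one cannot simply integrate along a path. The real content — which I would not reprove but cite from \cite{vologodsky2003hodge}, as the theorem statement evidently intends — is that the extra Hodge- and Frobenius-theoretic structure on unipotent connections selects a distinguished parallel transport that is canonical and hence automatically path-independent. Everything else is bookkeeping: translating $(1)$–$(5)$ into statements about the rank-$2$ connection attached to $\omega$, and checking the two explicit cases in $(4)$ by hand.
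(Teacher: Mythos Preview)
The paper does not prove this theorem; immediately after the statement it simply records that ``This theorem was proved by Colmez \cite[Th\'eor\`eme~1]{colmez1998integration} and a generalization was given later by Vologodsky \cite[Theorem~B]{vologodsky2003hodge}.'' So there is no in-paper argument to compare your proposal against.

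Your sketch is a reasonable high-level outline of what those references do, and in fact you end up in the same place: the existence half is not reproved but deferred to \cite{vologodsky2003hodge}. One small point worth flagging is that your uniqueness argument, as written, leans on reducing a general closed $1$-form to combinations of $df$ and $df/f$ via the classification of second- and third-kind differentials; that classification is stated in the paper only for curves, whereas the theorem is asserted for arbitrary smooth geometrically connected varieties. To make the uniqueness sketch honest in that generality you would need either to reduce to curves (e.g.\ connect $x$ and $y$ by a chain of curves in $X$ and use $(5)$) or to argue directly with $H^1_{\dR}$ of $X$ and the Albanese/Jacobian as Colmez does. This is a fixable gap, but since the paper itself treats the result as a black box, the cleanest move is to do the same and cite Colmez and Vologodsky.
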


This theorem was proved by Colmez \cite[Th\'{e}or\`{e}me~1]{colmez1998integration} and a generalization was given later by Vologodsky \cite[Theorem~B]{vologodsky2003hodge}. Following Besser and Zerbes \cite{BesserZerbes,BesserPairing}, we call this integral the \defi{Vologodsky integral} and denote it by $\lVint$.

\subsection{Berkovich--Coleman Integration} \label{Berk-Col-int-section} In order to study Berkovich--Coleman integration, we will follow Berkovich's language; in principle this can be avoided by using intersection theory on semistable curves, however, the analytic framework seems much more natural. The canonical references are Berkovich's book \cite{berkovic90:analytic_geometry} and the paper of Baker--Payne--Rabinoff \cite{baker_payne_rabinoff13:analytic_curves} (see also \cite[Section~3]{krzb:utahsurvey} for a summary of the basics that will be used below).

For a smooth $\C_p$-analytic space $X$, we let $Z^1_{\dR}(X)$ denote the space of closed analytic $1$-forms on $X$, and we let $\cP(X)$ denote the set of paths $\gamma\colon [0,1]\to X$ with ends in $X(\C_p)$. The following is a special case of a theorem of Berkovich \cite[Theorem~9.1.1]{berkovic07:integration}. 

\begin{thm} There is a unique way to construct, for every smooth $\C_p$-analytic space $X$, every $\omega\in Z^1_\dR(X)$ and every $\gamma\in \cP(X)$, an integral
\[\int_\gamma \omega \in \C_p\]
such that the following are true:
\begin{enumerate}
    \item If $\omega_1,\omega_2\in Z^1_\dR(X)$ and $c_1,c_2\in \C_p$, then
    \[\int_\gamma (c_1\omega_1+c_2\omega_2) = c_1\int_\gamma \omega_1 + c_2\int_\gamma \omega_2.\]
    \item If $\gamma_1,\gamma_2\in\cP(X)$ with $\gamma_1(1)=\gamma_2(0)$, then \[\int_{\gamma_1\gamma_2}\omega = \int_{\gamma_1} \omega + \int_{\gamma_2}\omega\] where $\gamma_1\gamma_2$ is the concatenation.
    \item If $\gamma_1,\gamma_2\in\cP(X)$ are homotopic with fixed endpoints, then
    \[\int_{\gamma_1} \omega = \int_{\gamma_2} \omega.\]
    \item If $f$ is a rational function on $X$, then
    \[\int_\gamma df = f(\gamma(1))-f(\gamma(0))\]
    provided that $f$ is defined on the ends of $\gamma$.
    \item If $f$ is a non-zero rational function on $X$, then
    \[\int_\gamma \frac{df}{f} = \Log\left(\frac{f(\gamma(1))}{f(\gamma(0))}\right)\]
    provided that $f$ is defined and non-zero on the ends of $\gamma$.
    \item If $h\colon X\to Y$ is a morphism, $\gamma\in \cP(X)$ and $\omega\in Z^1_\dR(Y)$, then
    \[\int_\gamma h^*\omega = \int_{h\circ\gamma} \omega.\]
\end{enumerate}
\end{thm}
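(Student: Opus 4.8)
The plan is to deduce this statement from Berkovich's general integration theory for one-forms and then to check that axioms (1)--(5) pin the integral down uniquely. For existence I would invoke the construction of \cite[Theorem~9.1.1]{berkovic07:integration}: on any smooth $\C_p$-analytic space $X$ Berkovich produces a sheaf of ``naive analytic functions'' --- a $D_X$-module, filtered by unipotence degree, which already contains branches of the $p$-adic logarithm --- with the property that every closed analytic one-form has, locally, a primitive in this sheaf. Because the Berkovich topology on $X$ is locally contractible (again by Berkovich), such a local primitive $F$ of a given $\omega\in Z^1_\dR(X)$ can be analytically continued along any $\gamma\in\cP(X)$, and one sets $\int_\gamma\omega := F_{\gamma(1)}-F_{\gamma(0)}$ for the continued branch. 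With this description linearity (1) and additivity under concatenation (2) are immediate, and homotopy invariance (3) is exactly the assertion that the continuation of $F$ depends only on the homotopy class of $\gamma$, which holds because the primitive sheaf is, fibrewise, an iterated extension of constant sheaves.

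For (4): if $\omega=df$ with $f$ rational then $f$ is itself a primitive in the relevant sense, so the integral telescopes to $f(\gamma(1))-f(\gamma(0))$; and if $\omega=df/f$ then $\Log\circ f$, with $\Log$ the chosen branch (which lies in Berkovich's sheaf), is a primitive, giving the logarithmic formula. Axiom (5) is the functoriality of the whole package: a morphism $h\colon X\to Y$ induces a $d$-compatible pullback on sheaves of naive functions and carries paths to paths, so $h^*F$ is a primitive of $h^*\omega$ along $h\circ\gamma$. For uniqueness, suppose $\int^{(1)}$ and $\int^{(2)}$ both satisfy (1)--(5), fix $\omega$ and $\gamma$, and use compactness of the image of $\gamma$ to write $\gamma=\gamma_1\cdots\gamma_n$ with each $\gamma_j$ contained in an open $U_j$ which, after applying (5), is a standard model --- an open disc or (in the curve case that we actually need) an open annulus. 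On a disc $H^1_\dR$ vanishes, so $\omega|_{U_j}=df_j$ and (4) determines $\int_{\gamma_j}\omega$; on an annulus $\omega|_{U_j}=c_j\,dt/t+df_j$ and (1) together with (4) does the same. Summing over $j$ via (2) shows $\int^{(1)}_\gamma\omega=\int^{(2)}_\gamma\omega$.

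The real content is Berkovich's existence theorem rather than this bookkeeping: constructing the sheaf of naive functions and showing closed one-forms admit primitives in it along paths rests on local contractibility of smooth Berkovich spaces and on a delicate local analysis of one-forms near the non-classical points of $X$. This is the step I expect to be hardest, and it is precisely what \cite[Theorem~9.1.1]{berkovic07:integration} supplies. The matching subtlety on the uniqueness side is the availability, near every point of the image of a path, of a neighbourhood on which the de Rham cohomology of one-forms is spanned by classes of the form $df$ and $df/f$; for curves this is the disc/annulus dichotomy above, and it is all that is used in the rest of the paper. Accordingly, in the write-up I would cite \cite[Theorem~9.1.1]{berkovic07:integration} for existence and include only the short reduction just sketched for the characterization.
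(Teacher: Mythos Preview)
The paper does not prove this theorem; it simply records it as ``a special case of a theorem of Berkovich \cite[Theorem~9.1.1]{berkovic07:integration}'' and moves on. Your proposal to cite exactly that reference for existence is therefore precisely what the paper does, and the extra sketch you give of the primitive-sheaf construction and the disc/annulus uniqueness argument goes beyond anything the paper supplies.
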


This integration was first developed for curves by Coleman \cite{coleman1982dilogarithms,coleman85:torsion} and Coleman--de Shalit \cite{coleman_shalit88:p_adic_regulator}. Following Katz, Rabinoff, and Zureick-Brown \cite{krzb15:uniform_bounds,krzb:utahsurvey},  we call this integral the \defi{Berkovich--Coleman integral} and denote it by $\lBCint$. We call a Berkovich--Coleman integral along a closed path a \defi{Berkovich--Coleman period}.

When $X$ is simply-connected, in which case the Berkovich--Coleman integral is path-independent, we may simply write $\lBCint_x^y = \lBCint_\gamma$ for any path $\gamma$ from $x$ to $y$. 

For contrast with the Vologodsky integral, the Berkovich--Coleman integral is \emph{local}, i.e., if $U\subset X$ is an open subdomain and $\gamma([0,1])\subset U$, then the integral $\lBCint_\gamma\omega$ can be computed from $U$, $\omega|_U$ and $\gamma$.

\subsection{Comparison of the Integrals} \label{ComparisonofTheIntegrals} In this subsection, we fix a smooth, proper and geometrically connected (algebraic) curve $X$ over $\bK$. On this space, we have the Vologodsky integration $\lVint$; on the other hand, we have the Berkovich--Coleman integration $\lBCint$ on its analytification, in the sense of Berkovich \cite{berkovic90:analytic_geometry}. The relation between the two will be provided by a combinatorial object which is quite easy to compute in practice, namely \emph{tropical integration}.

\subsubsection*{Tropical Integration} We begin by setting some notation and conventions. Let $\Gamma$ be a finite and connected graph. We will denote by $V(\Gamma)$ and $E(\Gamma)$, respectively, the set of vertices and oriented edges. For $e\in E(\Gamma)$, we write $i(e)$ and $t(e)$ for the initial and terminal point of $e$, respectively; and we denote by $-e$ the same edge as $e$ with the reverse orientation. 

The $\C_p$-vector space generated by $V(\Gamma)$ is called $0$-chains with coefficients in $\C_p$ and denoted by $C_0(\Gamma;\C_p)$; and the $\C_p$-vector space generated by $E(\Gamma)$ subject to the relations
\[e + (-e) = 0 \ \text{for each edge } e\]
is called $1$-chains with coefficients in $\C_p$ and denoted by $C_1(\Gamma;\C_p)$. As $C_0(\Gamma;\C_p)$ is canonically isomorphic to its dual $C_0(\Gamma;\C_p)^*$, we may identify a $0$-chain $C=\sum_{v} c_vv$ with the function $C\colon V(\Gamma)\to \C_p$ given by $v\mapsto c_v$. A similar remark applies to $C_1(\Gamma;\C_p)$. The boundary map $d\colon C_1(\Gamma;\C_p)\to C_0(\Gamma;\C_p)$ is defined by $e\mapsto t(e)-i(e)$. We set $H_1(\Gamma;\C_p) = \operatorname{ker}(d)$, these will be called the $1$-cycles.

Now we define the combinatorial object we need. For more general constructions along these lines, see \cite{mikhalkin_zharkov08:tropical_curves,baker_faber11:metric_properties_tropical_abel_jacobi,baker_rabinoff14:skeleton_jacobian}.

\begin{defn}
A \defi{tropical $1$-form}\footnote{Some authors call this a harmonic $1$-cochain.} on $\Gamma$ with values in $\C_p$ is a function $\eta\colon E(\Gamma)\to \C_p$ such that
\begin{enumerate}
    \item $\eta(-e)=-\eta(e)$, and
    \item for each $v\in V(\Gamma)$, we have $\sum_e \eta(e)=0$ where the sum is taken over all edges that are adjacent to $v$ and directed away from $v$ (harmonicity condition).
\end{enumerate} 
Denote the set of such functions by $\Omega^1_{\trop}(\Gamma;\C_p)$.
\end{defn}

Let us consider an example. For an oriented edge $e=vw$ of $\Gamma$, define
\[\eta_e\colon E(\Gamma)\to \C_p, \ \ \ e' \mapsto \begin{cases}
\pm 1 & \text{if}\ e'=\pm e, \\
0 & \text{otherwise}.
\end{cases}\] 
For $C=\sum_e c_e e\in H_1(\Gamma;\C_p)$, the function $\eta_C=\sum_e c_e \eta_e$ is a tropical $1$-form.

\begin{defn}
Let $\eta$ be a tropical $1$-form with values in $\C_p$ and let $\gamma$ be a path in $\Gamma$ specified as a sequence of edges $\gamma=e_1e_2\dots e_\ell$. We define the \defi{tropical integral} of $\eta$ along $\gamma$ by
\[\tint_{\gamma} \eta = \sum_{i=1}^{\ell} \eta(e_i)\in \C_p.\]
\end{defn}

As in \cite[Section~3.10]{katz2020padic}, we may extend the tropical integral to paths between points on $\Gamma$ by parametrizing edges. For a closed path $\gamma$ in $\Gamma$, this integral only depends on $[\gamma]\in H_1(\Gamma;\C_p)$, so it makes sense to write $\ltint_{C} \eta$ for $C\in H_1(\Gamma;\C_p)$.

Note that $\Omega^1_{\trop}(\Gamma;\C_p)\subset C_1(\Gamma;\C_p)^*$. For later use, we provide the following well known proposition which describes the difference.

\begin{prop} \label{harmonicandexact}
We have
\[C_1(\Gamma;\C_p)^* = \Omega^1_{\trop}(\Gamma;\C_p) \oplus \operatorname{Im}(d^*)\] where $d^*\colon C_0(\Gamma;\C_p)^*\to C_1(\Gamma;\C_p)^*$ is the coboundary map.
\end{prop}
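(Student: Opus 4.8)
The plan is to reduce the assertion to the case $A=\mathbb{R}$ and there deduce it from the orthogonal decomposition attached to a linear map between finite-dimensional inner product spaces. The first step is to recast $\Omega^1_{\trop}(\Gamma,A)$ as a kernel: since $C_1(\Gamma,A)$ carries the relations $e+(-e)=0$, every linear functional on it automatically satisfies condition~(1) in the definition of a tropical $1$-form, so $\Omega^1_{\trop}(\Gamma,A)$ is precisely the kernel of the \emph{divergence} map $\operatorname{div}\colon C_1(\Gamma,A)^*\to C_0(\Gamma,A)^*$, $(\operatorname{div}\eta)(v)=\sum_{e\,:\,i(e)=v}\eta(e)$, the harmonicity condition (2) being the vanishing of $\operatorname{div}\eta$. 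Choosing an orientation of $\Gamma$ (one oriented edge per geometric edge) identifies $C_0(\Gamma,A)^*$ and $C_1(\Gamma,A)^*$ with standard coordinate spaces over $A$ and realises both $\operatorname{div}$ and $d^*$ as matrices with entries in $\{0,\pm1\}$; in particular these maps are defined over $\Q$.

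Next I would invoke base change. Formation of kernels and images of these fixed $\Q$-matrices, and hence the property of a decomposition $C_1(\Gamma,-)^*=\Omega^1_{\trop}\oplus\operatorname{Im}(d^*)$ being a direct sum, commutes with extension of scalars along $\Q\hookrightarrow A$; and a direct-sum decomposition of a finite-dimensional $\Q$-vector space into $\Q$-subspaces can be checked after tensoring up to $\mathbb{R}$, because the $\Q$- and $\mathbb{R}$-dimensions of the summands agree while the $\Q$-intersection of the two summands embeds into their $\mathbb{R}$-intersection. Hence it is enough to prove the proposition for $A=\mathbb{R}$. There one equips $C_1(\Gamma,\mathbb{R})^*$ and $C_0(\Gamma,\mathbb{R})^*$ with the (orientation-independent) positive-definite inner products $\langle\eta,\xi\rangle=\sum_e\eta(e)\xi(e)$ and $\langle f,g\rangle=\sum_v f(v)g(v)$, checks by a short computation — grouping the oriented edges into pairs $\{e,-e\}$ — that $\langle d^*g,\eta\rangle=-\langle g,\operatorname{div}\eta\rangle$, so that the adjoint of $\operatorname{div}$ is $-d^*$, and then applies to the map $\operatorname{div}$ the standard orthogonal splitting of the domain into the kernel of the map and the image of its adjoint, obtaining $C_1(\Gamma,\mathbb{R})^*=\operatorname{ker}(\operatorname{div})\oplus\operatorname{Im}(d^*)=\Omega^1_{\trop}(\Gamma,\mathbb{R})\oplus\operatorname{Im}(d^*)$.

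The computational steps here are routine; the one point that needs genuine care — and the reason I would route the argument through $\mathbb{R}$ rather than argue uniformly over $A$ — is that the inner-product identity only gives $\sum_e\eta(e)^2=0$ for $\eta$ in $\Omega^1_{\trop}(\Gamma,A)\cap\operatorname{Im}(d^*)$, which forces $\eta=0$ over $\mathbb{R}$ but not over an arbitrary field of characteristic $0$ (for instance one containing $\sqrt{-1}$). The base-change and descent step is exactly what circumvents this, and is the only place the characteristic-$0$ hypothesis enters. Incidentally, connectedness of $\Gamma$ is not needed for the statement in this form.
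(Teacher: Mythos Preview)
The paper states this proposition as ``well known'' and gives no proof, so there is nothing to compare your argument against. Your proof is correct: the identification $\Omega^1_{\trop}(\Gamma,A)=\ker(\operatorname{div})$, the adjoint relation $\langle d^*g,\eta\rangle=-\langle g,\operatorname{div}\eta\rangle$ (summing over one chosen orientation per edge), and the base-change reasoning are all sound. One small simplification: you can skip the detour through $\mathbb{R}$ and argue directly over $\Q$, since the standard bilinear form on $\Q^n$ is already anisotropic (a sum of rational squares vanishes only if each term does), giving the orthogonal decomposition over $\Q$ immediately; then extend scalars along $\Q\hookrightarrow A$. Your diagnosis of why the uniform argument over $A$ fails---the form becomes isotropic once $A$ contains $\sqrt{-1}$---is exactly right and is precisely where the characteristic-zero hypothesis enters.
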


\subsubsection*{The Comparison Formula} Recall that we have the curve $X$ over $\bK$. Hereafter, we will abuse notation and use the marker ``an'' to also denote Berkovich analytification; the intention should be clear from the context. In fact, we will sometimes identify a Berkovich analytic space with its corresponding rigid analytic space.

By extending the field of definition if necessary, we assume that the curve $X$ admits a semistable $\mathcal{O}_{\bK}$-model $\fX$. Let $\Gamma\subset X^\an$ and $\tau\colon X^\an\to\Gamma$ be the corresponding skeleton and retraction, respectively. After identifying $\Gamma$ with the dual graph of the special fiber $\fX_{\bk}$, we have the following result whose proof is almost identical to that of \cite[Corollary~3.14]{katz2020padic}.

\begin{prop} \label{l:dualbasis} 
There exists a basis $C_1,\dots,C_h$ of $H_1(\Gamma;\C_p)$ and a basis $\eta_1,\dots,\eta_h$ of $\Omega^1_{\trop}(\Gamma;\C_p)$ with the property that 
\[\tint_{C_i} \eta_j=
\begin{cases}
1 & \text{if }\ i=j,  \\
0 & \text{if }\ i\neq j.
\end{cases}\]
\end{prop}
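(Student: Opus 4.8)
The plan is to exhibit the two bases explicitly and check the pairing directly, using the fact that tropical integration along $1$-cycles is nothing but the canonical pairing between $H_1(\Gamma;\C_p)$ and its dual inside $C_1(\Gamma,\C_p)^*$. First I would recall that $H_1(\Gamma;\C_p)$ is finite-dimensional of some dimension $h$ (the first Betti number of $\Gamma$, equivalently the rank of $H^1(\fX_\bk)$-part corresponding to the toric contribution), and choose any $\C_p$-basis $C_1,\dots,C_h$ of it; each $C_i$ is a $1$-chain $\sum_e c_{i,e}e$ with $d C_i = 0$. Dually, $C_1(\Gamma,\C_p)^*$ has a basis given by the functionals $\eta_e$ defined in the excerpt (one for each unoriented edge, with the sign convention $\eta_e(\pm e)=\pm 1$), and the tropical integral of $\eta = \sum_e a_e\eta_e$ along $\gamma = f_1\cdots f_\ell$ is $\sum_i \eta(f_i) = \sum_i a_{f_i}$, i.e.\ exactly the value of the linear functional $\eta$ on the $1$-chain $[\gamma] = \sum_i f_i$. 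Hence $\tint_{C_i}\eta = \eta(C_i)$ is literally evaluation of a cochain on a cycle.

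Next I would invoke Proposition~\ref{harmonicandexact}: since $\C_p$ has characteristic $0$, we have the direct sum decomposition
\[
C_1(\Gamma,\C_p)^* = \Omega^1_\trop(\Gamma,\C_p)\oplus \operatorname{Im}(d^*).
\]
The key observation is that $\operatorname{Im}(d^*)$ annihilates $H_1(\Gamma;\C_p) = \ker(d)$: for $\phi\in C_0(\Gamma,\C_p)^*$ and $C\in\ker(d)$ one has $(d^*\phi)(C) = \phi(dC) = \phi(0) = 0$. Therefore the restriction-to-cycles pairing $\Omega^1_\trop(\Gamma,\C_p)\times H_1(\Gamma;\C_p)\to\C_p$ is exactly the pairing induced by identifying $\Omega^1_\trop(\Gamma,\C_p)$ with the quotient $C_1(\Gamma,\C_p)^*/\operatorname{Im}(d^*) \cong H_1(\Gamma;\C_p)^*$. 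A clean way to see this last isomorphism is the dual of the exact sequence $0\to H_1(\Gamma;\C_p)\to C_1(\Gamma,\C_p)\xrightarrow{d} C_0(\Gamma,\C_p)$: dualizing gives that $C_1(\Gamma,\C_p)^*/\operatorname{Im}(d^*)$ is canonically $H_1(\Gamma;\C_p)^*$, and Proposition~\ref{harmonicandexact} identifies the complement $\Omega^1_\trop$ with that quotient. Consequently the pairing $\Omega^1_\trop(\Gamma,\C_p)\times H_1(\Gamma;\C_p)\to\C_p$, $(\eta,C)\mapsto\tint_C\eta$, is a perfect pairing of $\C_p$-vector spaces of the same finite dimension $h$.

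Given a perfect pairing between two finite-dimensional vector spaces, one may pick any basis $C_1,\dots,C_h$ of $H_1(\Gamma;\C_p)$ and take $\eta_1,\dots,\eta_h$ to be the dual basis of $\Omega^1_\trop(\Gamma,\C_p)$ under this pairing; by construction $\tint_{C_i}\eta_j = \delta_{ij}$, which is precisely the assertion of the proposition. For concreteness one can take $C_1,\dots,C_h$ to be the cycle basis arising from a spanning tree of $\Gamma$ (one cycle per edge outside the tree); then $\eta_j$ is pinned down uniquely as the harmonic cochain dual to $C_j$, and its existence and uniqueness are guaranteed by the perfectness just established. The main (and essentially only) obstacle is the bookkeeping that makes the identification $\Omega^1_\trop(\Gamma,\C_p)\cong H_1(\Gamma;\C_p)^*$ precise — i.e.\ checking that $\operatorname{Im}(d^*)$ is exactly the annihilator of $H_1(\Gamma;\C_p)$ and hence, combined with the direct-sum decomposition of Proposition~\ref{harmonicandexact}, that restriction to cycles is injective on $\Omega^1_\trop$; a short dimension count ($\dim\Omega^1_\trop = \dim C_1^* - \dim\operatorname{Im}(d^*) = \#E(\Gamma) - (\#V(\Gamma)-1) = h$, using connectedness of $\Gamma$) then closes the argument. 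Everything else is formal linear algebra over a characteristic-zero field, so the proof is essentially identical in structure to that of \cite[Corollary~3.14]{katz2020padic}.
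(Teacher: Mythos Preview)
Your argument is correct and follows exactly the approach the paper has in mind: it simply cites \cite[Corollary~3.14]{katz2020padic}, and what you wrote is precisely that argument spelled out --- use Proposition~\ref{harmonicandexact} to identify $\Omega^1_\trop(\Gamma,\C_p)$ with $H_1(\Gamma;\C_p)^*$ via the tropical integration pairing, then take dual bases. One small notational slip: in your dimension count $\#E(\Gamma)$ should be the number of \emph{unoriented} edges (i.e.\ $|E(\Gamma)/\pm|$), since in the paper $E(\Gamma)$ denotes oriented edges and $C_1$ is the quotient by $e+(-e)=0$.
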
 

Finally, we put everything together.

\begin{thm} \label{mainthm} 
With the above notation, pick a loop $\gamma_i$ in $X^\an$ satisfying $\tau(\gamma_i)=C_i$ for each $i=1,\dots,h$. Let $\omega$ be a meromorphic $1$-form on $X$, let $x,y\in X(\bK)$ and pick a path $\gamma$ in $X^\an$ with $\gamma(0)=x$, $\gamma(1)=y$. Then
\[\Vint_x^y\omega = \BCint_\gamma\omega-\sum_{i=1}^h \left(\BCint_{\gamma_i} \omega\right)\left(\tint_{\tau(\gamma)} \eta_i\right).\]
\end{thm}

The proof of this will occupy the rest of this section. Before giving the proof, we make a few preliminary remarks.

\begin{rem} The path $\gamma$ exists because
the assumption that $X$ is geometrically connected implies that $X^\an$ is path-connected; and the integral $\lBCint_{\gamma_i} \omega$ is well-defined as $\gamma_i$ is unique up to fixed endpoint homotopy.
\end{rem}

\begin{rem}
The Vologodsky and Berkovich--Coleman integrals coincide when $\Gamma$ is a tree, or equivalently, when the Jacobian of $X$ has (potentially) good reduction; and, in particular, when $X$ has (potentially) good reduction.
\end{rem}

\begin{rem}
Theorem~\ref{mainthm} generalizes \cite[Theorem~3.16]{katz2020padic} since, when the integrand is holomorphic, the Vologodsky integral is the same as the abelian integral.
\end{rem}

Theorem~\ref{mainthm} follows from the main result of Besser and Zerbes \cite{BesserZerbes}, which we briefly recall here.

Write $\fX_{\bk} = \cup_i T_i$. By blowing up if necessary, we may assume that every irreducible component is smooth and that two different components intersect at at most one point.

The reduction map $\text{red}\colon X\to \fX_{\bk}$ allows us to cover $X$ by basic wide open spaces: define $U_v = \text{red}^{-1}T_v$ for each $v\in V(\Gamma)$. These spaces intersect along annuli corresponding bijectively to the quotient set of unoriented edges $E(\Gamma)/\pm$. An orientation of an annulus fixes a sign for the residue along this annulus. We make a bijection between oriented edges and oriented annuli by choosing the following convention: 
\begin{quotation}
For an edge $e$, the orientation of the corresponding annulus is the one corresponding to it being the annulus end of $T_{i(e)}$.
\end{quotation}
For an edge and for the corresponding annulus, we may use the same notation.

Choose Coleman primitives $F_v$ for $\omega$ on $U_v$ for each $v\in V(\Gamma)$. Notice that, for an oriented edge $e$, the function $F_{t(e)}|_e - F_{i(e)}|_e$ is constant because both Coleman primitives differentiate to $\omega$. This observation gives a map
\[\eta_{\omega}\colon  E(\Gamma)\to \C_p,\ \ \ e\mapsto F_{t(e)}|_e - F_{i(e)}|_e\]
which obviously satisfies $\eta_{\omega}(-e)=-\eta_{\omega}(e)$. By Proposition~\ref{harmonicandexact}, there is a unique, up to a global constant, way of choosing the $F_v$'s in such a way that the map $\eta_{\omega}$ is a tropical $1$-form. With these choices, thanks to \cite[Theorem~1.1]{BesserZerbes}, the following holds.

\begin{thm} \label{beszermainthm}
If $v,w$ are vertices such that $x\in U_v$ and $y\in U_w$, then
\[\Vint_x^y\omega = F_w(y)-F_v(x).\]
\end{thm}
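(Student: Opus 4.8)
The plan is to deduce the theorem directly from \cite[Theorem~1.1]{BesserZerbes}: the data set up above — the semistable model $\fX$, the covering of $X^{\an}$ by the basic wide opens $U_v$ glued along the annuli attached to the nodes, the $1$-form $\omega$, and the system of Coleman primitives $F_v$ normalized so that $\eta_{\omega}$ is a tropical $1$-form — is precisely an instance of the data to which that theorem applies, and its conclusion is exactly the asserted equality $\Vint_x^y\omega=F_w(y)-F_v(x)$. So the work consists of matching our objects and conventions with theirs; all of the genuine mathematical content, namely the comparison between Vologodsky's globally path-independent primitive and the \emph{a priori} only local Coleman primitives on the $U_v$, is what \cite{BesserZerbes} supplies.

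First I would pin down the combinatorial matching. After the harmless extension of $\bK$ we have a semistable model, and after blowing up we may assume the components $T_v$ are smooth and meet pairwise in at most one point, so that $\Gamma$ is the dual graph of $\fX_{\bk}$, each $U_v=\mathrm{red}^{-1}(T_v)$ is a basic wide open (each deleted disc lying in a distinct residue disc of the good reduction curve containing $U_v$), and the nonempty overlaps $U_v\cap U_w$ are annuli, in bijection with $E(\Gamma)/\pm$. A $\bK$-point reduces to a smooth point of $\fX_{\bk}$, so the vertices $v,w$ with $x\in U_v$ and $y\in U_w$ are uniquely determined and there is no ambiguity on the right-hand side. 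The one point needing care here is that our orientation convention for the annuli — for an oriented edge $e$, orienting the annulus $e$ so that it is the outgoing end of $T_{i(e)}$, which fixes the sign of the residue along it — agrees with the conventions in \cite{BesserZerbes} and is compatible with the boundary map $d(e)=t(e)-i(e)$ and with the generators $\eta_e$ entering the definition of $\Omega^1_{\trop}(\Gamma,\C_p)$.

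Next I would match the analytic data. The assignment $e\mapsto F_{t(e)}|_e-F_{i(e)}|_e$ is well-defined, since two Coleman primitives of $\omega$ on the annulus $e$ differ by a constant, and is visibly odd, so it lies in $C_1(\Gamma,\C_p)^*$; replacing $F_v$ by $F_v+c_v$ adds to it the coboundary $d^*$ of the $0$-cochain $(c_v)_v$. Since Proposition~\ref{harmonicandexact} gives the decomposition $C_1(\Gamma,\C_p)^*=\Omega^1_{\trop}(\Gamma,\C_p)\oplus\operatorname{Im}(d^*)$, there is a choice of the $F_v$, unique up to a single global additive constant, for which $\eta_{\omega}$ is harmonic — precisely the normalization demanded by \cite{BesserZerbes}. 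Applying \cite[Theorem~1.1]{BesserZerbes} to this data then yields $\Vint_x^y\omega=F_w(y)-F_v(x)$, and the leftover global constant cancels in the difference, in accordance with the left-hand side being independent of all choices.

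I expect the only real obstacle to be faithful bookkeeping rather than mathematical depth: one must be certain that ``harmonic cochain'' in \cite{BesserZerbes} is our $\Omega^1_{\trop}(\Gamma,\C_p)$ with the same sign conventions, that their distinguished wide open covering is literally $\{U_v\}_v$ with the same identification of overlaps with oriented annuli, and that their normalization of the Coleman primitives coincides with ours. Once these identifications are in place, the theorem is an immediate translation of their result.
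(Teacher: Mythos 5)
Your proposal matches the paper exactly: the paper offers no independent argument here but simply observes that, with the covering $\{U_v\}$, the oriented-annulus conventions, and the Coleman primitives $F_v$ normalized (via Proposition~\ref{harmonicandexact}) so that $\eta_\omega$ is a tropical $1$-form, the statement is precisely \cite[Theorem~1.1]{BesserZerbes}. Your additional remarks about checking that the sign conventions, the identification of overlaps with oriented annuli, and the normalization of primitives agree with those of Besser--Zerbes are sensible bookkeeping but do not constitute a different route.
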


In other words, Vologodsky integration is \emph{locally} given by Coleman primitives. 

In the following proof, we use the fact that Berkovich--Coleman integrals on basic wide open spaces are ordinary Coleman integrals (under the identification of a Berkovich space and its corresponding rigid space).

\begin{proof}[Proof of Theorem~\ref{mainthm}] 
We continue with the above notation. In the case that $v=w$, the formula in Theorem~\ref{mainthm} is the same as the formula in Theorem~\ref{beszermainthm}. Otherwise, if $\tau(\gamma)=e_1e_2\dots e_\ell$, then we can write $\gamma=\gamma^1\gamma^2\dots\gamma^{\ell+1}$ as a concatenation of paths, each staying in a basic wide open space. Set
\[P_i = \gamma^i(1)=\gamma^{i+1}(0), \ \ \ i=1,2,\dots,\ell.\]
Then we get
\[F_w(y)-F_v(x)
= \BCint_{x}^{P_1}\omega + \BCint_{P_1}^{P_2}\omega + \dots + \BCint_{P_\ell}^{y}\omega + \sum_{i=1}^{\ell} \eta_{\omega}(e_i)
= \BCint_\gamma\omega + \eta_{\omega}(\tau(\gamma)).\] Therefore, we need to show that
\[\eta_{\omega}(\tau(\gamma)) = -\sum_{i=1}^h \left(\BCint_{\gamma_i} \omega\right)\left(\tint_{\tau(\gamma)} \eta_i\right).\]

Since $\eta_{\omega}$ is a tropical $1$-form, $\eta_{\omega}=\sum_{i=1}^{h} c_i\eta_i$ for some constants $c_i$. The computation
\[\eta_{\omega}(C_j) = \sum_{i=1}^{h} c_i\eta_i(C_j) = \sum_{i=1}^{h} c_i\tint_{C_j} \eta_i = c_j\]
gives
\[\eta_{\omega}(\tau(\gamma)) = \sum_{i=1}^{h} \eta_{\omega}(C_i)\eta_i(\tau(\gamma)) = \sum_{i=1}^{h} \eta_{\omega}(C_i)\left(\tint_{\tau(\gamma)} \eta_i\right).\]
It remains to verify that 
\[\eta_{\omega}(C_i) = -\BCint_{\gamma_i} \omega,\ \ \ i=1,\dots,h.\]
If $C_i = e_{i1}e_{i2}\dots e_{i\ell_i}$, then we can write $\gamma_i=\gamma_i^1\gamma_i^2\dots\gamma_i^{\ell_i}$ as a concatenation of paths, each staying in a basic wide open space. Set
\[P_{ij}=
\begin{cases}
\gamma_i^j(1)=\gamma_i^{j+1}(0) & \text{if }\ j=1,2,\dots,\ell_i-1, \\
\gamma_i^{\ell_i}(1)=\gamma_i^1(0) & \text{if }\ j=\ell_i.
\end{cases}\]
\[\begin{tikzpicture}
\draw [dotted, thick] (-4,0) ellipse (2.4cm and 1.5cm);
\filldraw (-4,0) node{\Large $\gamma_i$};
\draw [thick] (-6.4,0)  arc[x radius = 2.4cm, y radius = 1.5cm, start angle= 180, end angle= 100];
\draw [thick] (-6.4,0)  arc[x radius = 2.4cm, y radius = 1.5cm, start angle= -180, end angle= -135];
\filldraw (-5.7,1.05) circle (2pt);
\filldraw (-5.92,1.40) node{$P_{i1}$};
\filldraw (-4.4,1.48) circle (2pt);
\filldraw (-4.4,1.83) node{$P_{i2}$};
\filldraw (-6.4,0) circle (2pt);
\filldraw (-6.8,0.1) node{$P_{i\ell_i}$};
\filldraw (-5.7,-1.05) circle (2pt);
\node [thick,rotate=50] at (-6.2,0.6) {$\scriptscriptstyle >$};
\filldraw (-6.5,0.85) node{$\gamma_i^1$};
\node [thick,rotate=23] at (-5.1,1.335) {$\scriptscriptstyle >$};
\filldraw (-5.2,1.75) node{$\gamma_i^2$};
\node [thick,rotate=-50] at (-6.2,-0.6) {$\scriptscriptstyle <$};
\filldraw (-6.52,-0.735) node{$\gamma_i^{\ell_i}$};
\end{tikzpicture}\]
By setting $F_{ij} = F_{i(e_{ij})}$ for $j=1,2,\dots,\ell_i$, we see that
\begin{align*}
\BCint_{\gamma_i}\omega &= \BCint_{\gamma_i^1}\omega + \BCint_{\gamma_i^2}\omega + \dots + \BCint_{\gamma_i^{\ell_i}}\omega \\ 
&= \big(F_{i1}(P_{i1}) - F_{i2}(P_{i1})\big) + \big(F_{i2}(P_{i2}) - F_{i3}(P_{i2})\big) + \dots + \big(F_{i\ell_i}(P_{i\ell_i}) - F_{i1}(P_{i\ell_i})\big) \\
&= -\eta_{\omega}(e_{i1})-\eta_{\omega}(e_{i2})- \dots - \eta_{\omega}(e_{i\ell_i}) = -\eta_{\omega}(C_i)
\end{align*} 
as required. 
\end{proof}

\section{Coverings of Curves}
\label{Coverings}

Let $X$ be a smooth, proper and geometrically connected $\bK$-curve. In Section~\ref{ComparisonofTheIntegrals}, in order to express Berkovich--Coleman integrals on $X^\an$ in terms of ordinary Coleman integrals, we covered $X$ by basic wide opens. To do so, we used a semistable model of $X$. Alternatively, we could use a \emph{semistable covering}; an equivalent rigid analytic notion introduced in \cite{CMStableReduction}. Surprisingly, constructing a semistable covering is often easier in practice than constructing a semistable model; and this is what we will do in Section~\ref{NumericalExamples}. In this section, we summarize prior results from \cite[Section~4]{katz2020padic} regarding the construction of semistable coverings of hyperelliptic curves.

\subsection{Semistable Coverings} Let $X$ be a smooth, proper and geometrically connected rigid analytic curve over $\bK$. Let $\cC$ be a semistable covering of $X$ with dual graph $\Gamma(\cC)$, as in \cite[Definition~4.2]{katz2020padic}. We say $\cC$ is \defi{good} with respect to a subset $S\subset X(\C_p)$ if 
\begin{enumerate}
    \item each element of $S$ lies in at most one element of $\cC$, and
    \item for each $U\in\cC$, there exist a curve $X_U$ of good reduction and an embedding $\iota\colon U\to X_U$ such that the points of $\iota(S\cap U(\C_p))$ lie in distinct residue discs.
\end{enumerate}
In this case, we form a new graph $\Gamma(\cC,S)$ from $\Gamma(\cC)$ as follows: we attach half-open edges corresponding to elements of $S$ to the vertices corresponding to the elements of $\cC$ containing them.

\subsection{Hyperelliptic Coverings} \label{HyperellipticCoverings} Now let $X$ be a hyperelliptic curve given by $y^2 = f(x)$ where $f(x)\in \bK[x]$. We write $w\colon X\to X$ for the hyperelliptic involution and $\pi\colon X\to\P^1$ for the hyperelliptic double cover. We define the \defi{roots} of $f(x)$ to be the zeroes of $f(x)$ together with $\infty$ if $f(x)$ has odd degree and write the set of roots as $S_f$. 
Following the methods in \cite[Section~4.3]{katz2020padic}, in particular \cite[Algorithm~1]{katz2020padic}, one can effectively construct a semistable covering $\cC$ of $\P^{1,\an}$ by rational basic wide opens that is good with respect to $S_f$ together with its dual graph $\Gamma(\cC,S_f)$ satisfying the following properties:
\begin{itemize}
    \item The intersection of two distinct elements of $\cC$ is either empty or an annulus.
    \item If $U\in\cC$, then $X_U=\P^{1,\an}$ and the embedding $\iota\colon U\to \P^{1,\an}$ is a fractional linear transformation.
    \item The dual graph $\Gamma(\cC,S_f)$ is a tree and is a graph structure on the skeleton of $\P^{1,\an}\setminus S_f$.
\end{itemize}

The map $\pi\colon X\to\P^1$ will allow us to pass from $\P^{1,\an}$ to $X^\an$. Let $W$ be the set of Weierstrass points, i.e., the set of fixed points of $w$. Then, as explained in \cite[Section~4.9]{katz2020padic}, the set
\[\cD = \{\text{components of }\pi^{-1}(U) \mid U\in\cC\}\]
is a semistable covering of $X^\an$ by hyperelliptic basic wide opens that is good with respect to $W$. This covering and its dual graph $\Gamma(\cD,W)$ enjoy the following properties:
\begin{itemize}
    \item If two distinct elements of $\cD$ are not disjoint, then their intersection is either an annulus or the union of two disjoint annuli.
    \item If $U\in\cD$, then $X_U$ is either the rigid analytification of a rational curve or of a hyperelliptic curve.
    \item The dual graph $\Gamma(\cD,W)$ is a double cover of $\Gamma(\cC,S_f)$ and is a graph structure on the skeleton of $X^\an\setminus W$.
\end{itemize}

\section{Computing Berkovich--Coleman Integrals} 
\label{BC-int-hypcur}

The paper \cite{katz2020padic} describes an effective method for numerically computing Berkovich--Coleman integrals of regular forms on hyperelliptic curves. In this section, we extend the method in a natural way to also cover meromorphic forms. This will make it possible to compute Vologodsky integrals by Theorem~\ref{mainthm}. 

Let $X$ be a hyperelliptic curve given by $y^2 = f(x)$ for some polynomial $f(x)\in \bK[x]$ of degree at least $3$. Recall the coverings $\cC$, $\cD$ constructed in Section~\ref{HyperellipticCoverings}. Here is a rough outline of our method:
\begin{enumerate}
    \item Reduce the problem of computing $\lBCint_{\gamma} \omega$ on $X^\an$ to computing Berkovich--Coleman integrals on certain elements of $\cD$.
    \item For each element $Y$ of $\cD$ of interest, go through the following steps:
    \begin{enumerate}
    \item Identify $Y$ with a basic wide open $Z$ inside the analytification of a curve $\tilde{X}$ of good reduction.
    \item Expand the pull back of the form $\omega|_{Y}$ to $Z$ as a power series in certain meromorphic forms on $\tilde{X}$.
    \item By a pole reduction argument, rewrite the terms in the power series expansion in terms of \emph{basis} elements.
    \item Employ the known integration algorithms on $\tilde{X}$.
    \end{enumerate}
\end{enumerate}

We begin by breaking up Berkovich--Coleman integrals into smaller, more manageable pieces. The main idea has already appeared in the proof of Theorem~\ref{mainthm}.

\subsection{Berkovich--Coleman Integration on Paths} \label{BerkovichColemanIntegrationonPaths} Let $\Gamma$ be the dual graph of $\cD$. For a vertex $v$, let $U_v$ be the corresponding hyperelliptic basic wide open space; for an edge $e=vw$, let $U_e$ be the corresponding component of $U_v\cap U_w$. Pick a point $P_\sigma$ in each $U_\sigma$ for $\sigma = v,e$; we will call these points \textbf{reference points}. We identify $\Gamma$ with the skeleton of $X^\an$ and write $\tau\colon X^\an\to\Gamma$ for the retraction map.

Let $\gamma\colon [0,1]\to X^\an$ be a path (which is allowed to be closed) with ends $x,y$ in $X^\an(\C_p)$. If $x\in U_v$, $y\in U_w$ and $\tau(\gamma)=e_1e_2\dots e_{\ell}$, because the Berkovich--Coleman integral is invariant under fixed endpoint homotopy, we have
\[\BCint_{\gamma} \omega = \BCint_x^{P_v} \omega +
\sum_{i=1}^{\ell} \left(
\BCint_{P_{i(e_i)}}^{P_{e_i}} \omega
+\BCint_{P_{e_i}}^{P_{t(e_i)}} \omega
\right) + 
\BCint_{P_w}^y \omega\]
for every meromorphic $1$-form $\omega$.  Here
\begin{itemize}
    \item the integral from $x$ to $P_v$ is to be performed on $U_v$,
    \item the integral from $P_{i(e_i)}$ to $P_{e_i}$ is to be performed on $U_{i(e_i)}$,
    \item the integral from $P_{e_i}$ to $P_{t(e_i)}$ is to be performed on $U_{t(e_i)}$, and
    \item the integral from $P_w$ to $y$ is to be performed on $U_w$.
\end{itemize}

Now we discuss Berkovich--Coleman integrals on elements of the covering $\cD$. Note that these are ordinary Coleman integrals.

\subsection{Passing to Curves of Good Reduction} \label{sbsctnpassingtogoodred} Let $Y$ be an element of $\cD$. By construction, $Y$ is either the preimage of some element of $\cC$ under $\pi\colon X^\an\to\P^{1,\an}$ or one of the two components of such a preimage. In the latter case, differential forms on $Y$ can be written in terms of $x$ alone and so can be integrated easily; hence we may assume that $Y=\pi^{-1}(U)$ for some $U\in\cC$. We will pass to a curve of good reduction to make use of the existing explicit methods.

We assume for the rest of this section that the polynomial $f(x)$ is monic with integral coefficients in $\bK$. Extend the field $\bK$ so that it contains all finite roots of $f(x)$. We view $U$ as a subset of $\P^{1,\an}$ where we have made a linear fractional transformation to ensure that the elements of $S_f\cap U(\C_p)$, if any, lie in different residue discs. We may suppose without loss of generality that $U$ is $B(0,R)$ minus some closed discs for some $R>1$ and that the set $S_f\cap U(\C_p)$ lies in $\overline{B}(0,1)$. Let $I_1,\dots,I_m$ be the partition of $S_f\cap B(0,R)$ according to which residue disc a point belongs. Relabeling if necessary, we may assume that the number of elements of $I_j$ is
\begin{itemize}
\item at least $2$ and even for $j=1,\dots,k$;
\item at least $2$ and odd for $j=k+1,\dots,\ell$; and
\item $1$ for $j=\ell+1,\dots,m$.
\end{itemize}
Set
\[ L_j =
  \begin{cases}
    \hfil |I_j|/2       &\text{if } j=1,\dots,k;\\
    (|I_j|-1)/2  &\text{if } j=k+1,\dots,\ell.
  \end{cases} \]
For $j=1,\dots,\ell$, pick a point $\beta_j\in \A^1(\bK)\setminus U(\bK)$ in the residue disc containing $I_j$; and for $j=\ell+1,\dots,m$, let $\beta_j$ be the unique element of $I_j$. Put
\[g(x) = \prod_{j=k+1}^{m} (x-\beta_j),\ \ \ h(x) = \prod_{j=1}^{\ell}(x-\beta_j)^{L_j}.\]
Let $I_{\infty}$ denote the set of roots of $f(x)$ lying outside of the open disc $B(0,R)$ and set 
\[k(x) = \left(\prod_{j=1}^{\ell}\prod_{\beta\in I_j}\left(\frac{x-\beta}{x-\beta_j}\right)\right)\left(\prod_{\beta\in I_\infty\setminus\{\infty\}}(x-\beta)\right).\]
By construction,
\begin{itemize}
    \item we have $f(x)=g(x)h(x)^2k(x)$,
    \item the function $\frac{1}{h(x)k(x)^{1/2}}$ is analytic on $U$, and
    \item the polynomial $g(x)$ is non-constant and has at most one root in each residue disc.
\end{itemize}
Then the map
\begin{equation}
\label{IsomBetweenYandZ}
Y=\pi^{-1}(U) \to Z\coloneq\{(x,\tilde{y})\mid \tilde{y}^2=g(x), x\in U\},\ \ \ (x,y) \mapsto \left(x,\frac{y}{h(x)k(x)^{1/2}}\right) 
\end{equation}
is an isomorphism and the curve $\tilde{X}\colon \tilde{y}^2=g(x)$, which is either rational or hyperelliptic, has good reduction. For more details, we refer the reader to the first half of \cite[Section~7]{katz2020padic}. Thanks to this isomorphism, it suffices to compute integrals on the basic wide open space $Z$ inside $\tilde{X}^{\an}$. 

Let $\omega$ be a meromorphic $1$-form on $X$. We will describe what $\omega|_{Y}$ looks like in the new coordinates and expand it in a power series. We start with the following reduction step. 

\subsection{Reduction Step}
For an integer $i$ and an element $\beta$ in $\A^1(\bar{\bK})$, define the differential forms
\[\omega_i = x^i \frac{dx}{2y},\ \ \ \nu_{\beta} = \frac{1}{x-\beta}\frac{dx}{2y}.\]

\begin{prop} 
\label{reductionprop} If $f(x)$ is of degree $d$, then the problem of computing $\lBCint \omega$ reduces to computing
\begin{enumerate}
    \item $\lBCint \omega_i$,\ \ \ $i=0,1,\dots,d-2$; and
    \item $\lBCint \nu_{\beta}$,\ \ \ $\beta$ is a non-root of $f(x).$
\end{enumerate}
\end{prop}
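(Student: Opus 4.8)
The plan is to reduce an arbitrary meromorphic $1$-form $\omega$ on $X$ to the standard generators $\omega_i$ and $\nu_\beta$ by a sequence of elementary manipulations, each of which is compatible with Berkovich--Coleman integration. First I would split $\omega$ according to the hyperelliptic involution $w$. Writing a general meromorphic $1$-form as $\omega = r(x)\,dx + s(x)\,y\,dx$ with $r, s \in \bK(x)$ (using $y^2 = f(x)$ to clear any higher powers of $y$ and to move $y$ out of denominators), the part $r(x)\,dx$ is pulled back from $\P^1$ along $\pi$. By functoriality of Berkovich--Coleman integration (property (5) of the Berkovich integral), $\lBCint_\gamma r(x)\,dx = \lBCint_{\pi\circ\gamma} r(x)\,dx$, and on $\P^1$ a rational differential can be integrated directly via partial fractions: it is a $\bK$-linear combination of exact forms $x^i\,dx$ (for $i \geq 0$) and of the forms $\frac{dx}{(x-\beta)^n}$; the $n \geq 2$ terms are exact (differentials of $\frac{-1}{(n-1)(x-\beta)^{n-1}}$) and the $n = 1$ terms are $d\log(x-\beta)$, so by properties (1) and (4) these contribute nothing new to compute. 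Hence it suffices to treat the anti-invariant part $\omega' = s(x)\,y\,dx = 2\,s(x)\,y^2\,\frac{dx}{2y} = 2\,s(x)\,f(x)\,\frac{dx}{2y}$, i.e. we may assume $\omega = q(x)\,\frac{dx}{2y}$ for a rational function $q(x) \in \bK(x)$.

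Next, by additivity of $\lBCint$ (property (1)) and a partial fraction decomposition of $q(x)$ over $\bK$ (having extended $\bK$ to contain the relevant roots, which is harmless since Berkovich--Coleman integration is insensitive to finite base extension), it suffices to treat $\omega = x^i\,\frac{dx}{2y}$ for $i \geq 0$ and $\omega = \frac{1}{(x-\beta)^n}\,\frac{dx}{2y}$ for $n \geq 1$ and $\beta \in \A^1(\bar\bK)$. The first family is handled by a descending pole-order / degree reduction: using the relation obtained from differentiating $x^m y = x^m f(x)^{1/2}$, namely $d(x^m y) = \big(m x^{m-1} f(x) + \tfrac12 x^m f'(x)\big)\frac{dx}{y} = \big(2m x^{m-1} f(x) + x^m f'(x)\big)\frac{dx}{2y}$, one sees that each $x^i \frac{dx}{2y}$ with $i \geq d-1$ is congruent, modulo exact differentials (which integrate to a function difference and so are ``already computed''), to a $\bK$-linear combination of $x^j\frac{dx}{2y}$ with $j < i$; iterating brings everything down to $0 \leq i \leq d-2$, which is item (1). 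For the terms $\frac{1}{(x-\beta)^n}\frac{dx}{2y}$ with $n \geq 2$ I would run the analogous reduction ``at the pole $\beta$'': differentiating $\frac{y}{(x-\beta)^m}$ produces $\frac{dx}{2y}$ times a rational function whose worst pole at $\beta$ has order $m+1$ with nonzero leading coefficient (here one uses $f(\beta) \neq 0$, which holds precisely because $\beta$ is a non-root of $f$), so modulo exact differentials we can lower the pole order at $\beta$ step by step until we reach $n = 1$, i.e. $\nu_\beta$, plus possibly polynomial-in-$x$ terms that feed back into the first reduction. This yields item (2).

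The one subtlety is the case $\beta$ a root of $f(x)$, where the form $\frac{1}{(x-\beta)^n}\frac{dx}{2y}$ has a pole at a Weierstrass point and $f(\beta) = 0$, so the pole-lowering identity degenerates. Here I would use the local coordinate $t$ at the Weierstrass point $(\beta,0)$ with $x - \beta = t^2 \cdot(\text{unit})$ and $y = t\cdot(\text{unit})$, so that $\frac{1}{(x-\beta)^n}\frac{dx}{2y}$ has the local shape $t^{-(2n+1)}(\text{unit})\,dt$; its principal part is an exact differential $d\big(t^{-2n}\cdot(\text{unit series})\big)$ up to a possible residue term $\frac{dt}{t}$. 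But residues of meromorphic forms on a curve sum to zero and the forms $\omega_i$, $\nu_\beta$ (non-root) are of the second/first kind, so after subtracting off an appropriate combination of logarithmic differentials $df/f$ (which integrate via property (4) and need no further computation) and an exact differential, what remains is holomorphic at $(\beta,0)$ and reduces to the previous cases. I expect this Weierstrass-point bookkeeping to be the main obstacle: one has to check that the residues one is forced to introduce can always be absorbed into honest logarithmic differentials of rational functions on $X$, i.e. that the residual divisor is principal, which follows from the exact sequence (\ref{resdivhom}) together with the fact that $\omega$ itself, being meromorphic on the curve $X$, has residue divisor of degree $0$. Everything else is a finite, effective linear-algebra computation, and at each step the operations used (additivity, functoriality under $\pi$, and subtraction of $df$ and $df/f$) are exactly the ones under which $\lBCint$ behaves transparently.
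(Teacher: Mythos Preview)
Your approach is correct and gives a valid proof, but it differs from the paper's. The paper decomposes $\omega$ into a second-kind piece $\rho$ plus third-kind pieces $\nu_j$, reduces $\rho$ to the $\omega_i$ via the spanning set for $H^1_{\dR}(X)$, and then handles each $\nu_j$ by writing down an \emph{explicit} differential with residue divisor $(P_j)-(Q_j)$, namely $\frac{y+y(P_j)}{x-x(P_j)}\frac{dx}{2y}$ (and variants at infinity); expanding these immediately exhibits each $\nu_j$ as a combination of $\omega_0,\dots,\omega_g$, logarithmic differentials, and forms $y(P)\,\nu_{x(P)}$, the last vanishing when $P$ is Weierstrass. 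Your route---splitting along the hyperelliptic involution, pushing the $w$-invariant part to $\P^1$, and running partial fractions plus the $d(x^m y)$ and $d(y/(x-\beta)^m)$ pole reductions on the anti-invariant part---is more hands-on and entirely self-contained, at the cost of not isolating the third-kind contribution so cleanly.

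One correction to your Weierstrass bookkeeping: the local pole order of $\frac{1}{(x-\beta)^n}\frac{dx}{2y}$ at $(\beta,0)$ is $2n$, not $2n+1$ (since $\frac{dx}{2y}$ is regular there), and the residue is automatically zero because the form is anti-invariant under $w$ while the point is $w$-fixed. In fact the same reduction you use at non-roots works here too: when $f(\beta)=0$ one has
\[
d\!\left(\frac{y}{(x-\beta)^m}\right)=\frac{(1-2m)\,f'(\beta)+O(x-\beta)}{(x-\beta)^{m}}\,\frac{dx}{2y},
\]
with $(1-2m)f'(\beta)\neq 0$, so subtracting multiples of $d(y/(x-\beta)^m)$ kills the pole completely (all the way down through $m=1$), and no $\nu_\beta$ term survives. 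Thus your step (7) is much simpler than you feared: no residue term appears and no appeal to \eqref{resdivhom} is needed.
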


\begin{proof}
We can write $\omega$  as a linear combination, $\omega=\rho + \sum_j d_j \nu_j$, where $\rho$ is of the second kind, $d_j\in \bar{\bK}$ and $\nu_j$ is of the third kind. This gives
\[\BCint \omega = \BCint \rho + \sum_j d_j \BCint \nu_j.\]

In both the odd and even degree cases, the set $\{[\omega_0],[\omega_1],\dots,[\omega_{d-2}]\}$ forms a spanning set for $H_{\dR}^1(X)$. Therefore, we may write $\rho$ as a linear combination of $\omega_0,\omega_1,\dots,\omega_{d-2}$ together with an exact form. On the other hand, recalling the exact sequence \eqref{resdivhom} induced by the residual divisor homomorphism, we may assume that
\[\Res(\nu_j) = (P_j) - (Q_j).\]
Then, up to a holomorphic differential, we have
\[\nu_j = 
\begin{dcases}
\left(\dfrac{y+y(P_j)}{x-x(P_j)}-\dfrac{y+y(Q_j)}{x-x(Q_j)}\right)\dfrac{dx}{2y} &\text{if } P_j \text{ and } Q_j \text{ are finite}; \\
\omit\hfil$\dfrac{y+y(P_j)}{x-x(P_j)}\dfrac{dx}{2y}$\hfil &\text{if } d \text{ is odd},\  P_j \text{ is finite},\ Q_j \text{ is infinite};\\
\omit\hfil$2\omega_g$\hfil &\text{if } d \text{ is even},\  P_j=\infty^{-},\ Q_j=\infty^{+}; \\
\omit\hfil$\dfrac{y+y(P_j)}{x-x(P_j)}\dfrac{dx}{2y}-\omega_g$\hfil &\text{if } d \text{ is even},\  P_j \text{ is finite},\ Q_j=\infty^{-}; \\
\omit\hfil$\dfrac{y+y(P_j)}{x-x(P_j)}\dfrac{dx}{2y}+\omega_g$\hfil &\text{if } d \text{ is even},\  P_j \text{ is finite},\ Q_j=\infty^{+};
\end{dcases}\]
where $\infty^{\pm}$ are the points lying over $\infty$ (in the case when $d$ is even). In any case, the form $\nu_j$ is a linear combination of $\omega_0,\omega_1,\dots,\omega_{g-1},\omega_g$, differentials of the form $\frac{y(P)}{x-x(P)}\frac{dx}{2y}$ and logarithmic differentials. As $\frac{y(P)}{x-x(P)}\frac{dx}{2y}=0$ when $P$ is a Weierstrass point, the claim follows.
\end{proof}

The forms $\{\omega_i\}_{i=0,1,\dots,g-1}$ are already dealt with in \cite{katz2020padic}. For the others, we can proceed in essentially the same way.

\subsection{Power Series Expansion} \label{PowerSeriesExpansion}
Let $i\in\{0,1,\dots,d-2\}$ be an integer, and let $\beta_0$ be an element of $\A^1(\bar{\bK})$ with $f(\beta_0)\neq 0$. We consider the differentials $\omega_i$ and $\nu_{\beta_0}$; they correspond, under the isomorphism \eqref{IsomBetweenYandZ}, to
\[\omega_i(x,\tilde{y}) = \frac{x^i}{h(x)k(x)^{1/2}}\frac{dx}{2\tilde{y}},\ \ \ \nu_{\beta_0}(x,\tilde{y}) = \frac{1}{(x-\beta_0)h(x)k(x)^{1/2}}\frac{dx}{2\tilde{y}}.\]
We will expand these in a power series. Set
\[k_j(x) = \prod_{\beta\in I_j}\Big(1-\frac{\beta-\beta_j}{x-\beta_j}\Big),\ j=1,\dots,\ell;\ \ \ k_\infty(x) = \prod_{\beta\in I_\infty\setminus\{\infty\}}(-\beta)(1-\beta^{-1}x).\]
Thus, we have $k(x)=\big(\prod_{j=1}^{\ell} k_j(x)\big)k_\infty(x)$. Now,
\[\frac{1}{k_j(x)^{1/2}}=\sum_{n=0}^{\infty}\frac{B_{jn}}{(x-\beta_j)^n},\ j=1,\dots,\ell;\ \ \ \frac{1}{k_\infty(x)^{1/2}}=\sum_{n=0}^{\infty}B_{\infty n}x^n\]
for some $B_{jn},B_{\infty n}\in \bar{\bK}$. Then,
\begin{align*}
\frac{1}{h(x)k(x)^{1/2}} &= \left(\prod_{j=1}^\ell \sum_{n=0}^{\infty}\frac{B_{jn}}{(x-\beta_j)^{n+L_j}}\right)\left(\sum_{n=0}^{\infty}B_{\infty n}x^{n}\right)\\ 
&= \sum\limits_{\substack{n_1\geq L_1,\dots,n_{\ell}\geq L_{\ell} \\ n_\infty\geq 0}}\left(B_{n_1,\dots,n_{\ell},n_\infty}x^{n_{\infty}}\prod_{j=1}^{\ell}\frac{1}{(x-\beta_j)^{n_j}}\right)
\end{align*} 
for some $B_{n_1,\dots,n_{\ell},n_\infty}\in \bar{\bK}$, which can be bounded as follows.
\begin{prop} \label{BoundsForB's}
\emph{(\cite[Proposition~7.1]{katz2020padic})}
There exists a constant $C$ satisfying \[\|B_{n_1,\dots,n_{\ell},n_{\infty}}\|_p\leq Cp^{-{\frac{(n_\infty-i)+\sum_{j=1}^{\ell}(n_j-L_j)}{e}}},\]
where $e$ is the ramification degree of $\bK$ over $\Q_p$.
\end{prop}

Therefore, the expressions
\begin{align*}
\omega_i(x,\tilde{y}) &= \sum\limits_{\substack{n_1\geq L_1,\dots,n_{\ell}\geq L_{\ell} \\ n_\infty\geq i}}\left(B_{n_1,\dots,n_{\ell},n_\infty}x^{n_{\infty}}\prod_{j=1}^{\ell}\frac{1}{(x-\beta_j)^{n_j}}\frac{dx}{2\tilde{y}}\right),\\ 
\nu_{\beta_0}(x,\tilde{y}) &= \sum\limits_{\substack{n_1\geq L_1,\dots,n_{\ell}\geq L_{\ell} \\ n_\infty\geq 0}}\left(B_{n_1,\dots,n_{\ell},n_\infty}\frac{x^{n_{\infty}}}{x-\beta_0}\prod_{j=1}^{\ell}\frac{1}{(x-\beta_j)^{n_j}}\frac{dx}{2\tilde{y}}\right),
\end{align*} 
make sense. Since these two expressions are almost identical, we will focus only on the former; the latter can be handled in a completely analogous way.

Clearly, the form $\omega_i(x,\tilde{y})$ does not extend to the complete curve $\tilde{X}$ as a meromorphic differential, hence we can not perform its integral on $\tilde{X}^\an$, at least not directly.

\subsection{Interchanging Integration and Summation} \label{InterchangingIntegrationandSummation} 
Notice that $\lBCint\omega_i(x,\tilde{y})$ is the integral of a series of $1$-forms. Using \cite[Proposition~3.5]{katz2020padic}, we can interchange the order of summation and integration:
\begin{equation}
\label{intsum=sumint}
\BCint\omega_i(x,\tilde{y}) =\sum\limits_{\substack{n_1\geq L_1,\dots,n_{\ell}\geq L_{\ell} \\ n_\infty\geq i}}\left(B_{n_1,\dots,n_{\ell},n_\infty}\BCint x^{n_{\infty}}\prod_{j=1}^{\ell}\frac{1}{(x-\beta_j)^{n_j}}\frac{dx}{2\tilde{y}}\right).   
\end{equation}
The proof of this fact involves a careful analysis of bounding the integrals of terms. See \cite[Sections~6, 7]{katz2020padic} for details.

By \eqref{intsum=sumint}, we need to integrate
the terms in the series expansion of  $\omega_i(x,\tilde{y})$. Let
\[\eta = \eta_{n_1,\dots,n_{\ell},n_{\infty}} = x^{n_{\infty}}\prod_{j=1}^{\ell}\frac{1}{(x-\beta_j)^{n_j}}\frac{dx}{2\tilde{y}}\]
be such a term. As $\eta$ can be seen as a meromorphic form on the complete curve $\tilde{X}$, we can perform its integral on $\tilde{X}^{\an}$. First, we need to express this differential in terms of our \emph{basis} elements.

\subsection{Pole Reduction} Set $\tilde{d} = \text{deg}(g(x))$. Following the methods in \cite[Section~6]{katz2020padic}, in particular \cite[Algorithms~2, 3, 4, 5]{katz2020padic}, we can effectively find a meromorphic function $F = F(x,\tilde{y})$ and constants $c_i,d_j$ such that $\eta$ can be written as
\begin{equation}
\label{etainbasis}
\eta = dF + \sum_{i=0}^{\tilde{d}-2} c_i\tilde{\omega}_i + \sum_{j=1}^k d_j\tilde{\nu}_j
\end{equation}
where $k$ is as in Section~\ref{sbsctnpassingtogoodred} and
\[\tilde{\omega}_i = x^i \frac{dx}{2\tilde{y}},\ \ \ \tilde{\nu}_j = \frac{1}{x-\beta_j}\frac{dx}{2\tilde{y}}.\]
We sketch the procedure. It consists of two parts, reducing the pole order at finite points and at the point(s) at infinity, respectively.

We start with finite points. For an element $\beta$ in $\A^1(\bar{\bK})$ and a positive integer $m$, consider the exact form 
\[\mu_{\beta,m} = d\left(\frac{\tilde{y}}{(x-\beta)^m}\right) = \frac{(x-\beta)g'(x)-2mg(x)}{(x-\beta)^{m+1}}\frac{dx}{2\tilde{y}}\]
which has poles at the point(s) above $\beta$ and possibly also at the point(s) at infinity. If $g(\beta)\neq 0$, there are two poles of order $m+1$ above $\beta$; if $g(\beta)=0$, there is a single pole above $\beta$ and its order is $2m$. By subtracting off a linear combination of the forms $\mu_{\beta,m}$, we can cancel the non-simple poles of $\eta$ at non-Weierstrass points and the poles of $\eta$ at Weierstrass points. Then, by subtracting off suitable multiples of $\tilde{\nu}_j$, we can cancel the simple poles at non-Weierstrass points. Let $\eta'$ be the remainder.

We now move on to the point(s) at infinity. Notice that $\eta'$ is of the form $p(x)\frac{dx}{2\tilde{y}}$ for some polynomial $p(x)$. We need to lower the degree of $p(x)$. For a non-negative integer $m$, consider
\[\mu_{\infty,m} = d(x^m\tilde{y}) = \big(x^mg'(x)+2 mx^{m-1}g(x)\big)\frac{dx}{2\tilde{y}}\]
which has poles at the point(s) at infinity. We can subtract an appropriate linear combination of the differentials $\mu_{\infty,m}$ from $\eta'$ so that the remainder is of the form $q(x)\frac{dx}{2\tilde{y}}$ for some polynomial $q(x)$ of degree at most $\tilde{d}-2$, and hence can be expressed in terms of the differentials $\tilde{\omega}_i$.

We may bound the constants appearing in \eqref{etainbasis} as follows.

\begin{prop} 
\label{boundsoncoefficients}
Write
\[\eta = x^{n_{\infty}}\prod_{j=1}^{\ell}\frac{1}{(x-\beta_j)^{n_j}}\frac{dx}{2\tilde{y}} =dF + \sum_{i=0}^{\tilde{d}-2} c_i\tilde{\omega}_i + \sum_{j=1}^k d_j\tilde{\nu}_j\]
as above. Then we have the following:
\begin{enumerate}
    \item \label{boundsoncoefficientsF} The function $F$ is of the form $F = F_{nw}+F_w+F_\infty$ where
    \begin{enumerate}
        \item the absolute values of the coefficients of $F_{nw}$ are at most
        \[\max_{i=1,\dots,k}\left(p^{n_i/(p-1)}\right),\]
        \item the absolute values of the coefficients of $F_{w}$ are at most
        \[\max_{i=k+1,\dots,\ell}\left(n_ip^{1+n_i/(p-1)}\right),\]
        \item the absolute values of the coefficients of $F_{\infty}$ are at most the maximum of the following:
        \begin{enumerate}
            \item $\max_{i=1,\dots,k}\left(p^{n_i/(p-1)}\right)$,
            \item $\max_{i=k+1,\dots,\ell}\left(n_ip^{1+n_i/(p-1)}\right)$,
            \item $\tilde{d}(\tilde{d}+n_\infty)p^{2+n_\infty/(p-1)}$.
        \end{enumerate}
    \end{enumerate}
    \item \label{boundsoncoefficientsc} The absolute values of the $c_i$'s are at most
    \[\max_{i=1,\dots,\ell}(n_i,\tilde{d}(\tilde{d}+n_\infty))p^{2+\max_{i=1,\dots,\ell}(n_i/(p-1),n_\infty/(p-1))}.\]
    \item \label{boundsoncoefficientsd} The absolute values of the $d_j$'s are at most $1$.
\end{enumerate}
\end{prop}

\begin{proof} This follows easily from \cite[Proposition~6.9]{katz2020padic} and \cite[Proposition~7.2]{katz2020padic}.
\end{proof}

Finally, the known integration algorithms come into play.

\subsection{Integration Algorithms} \label{IntegrationAlgorithms} For $R,S\in Z(\C_p)$ at which the form $\eta$ is regular, the equality \eqref{etainbasis} gives
\[\BCint_S^R\eta=F(R)-F(S)+\sum_{i=0}^{\tilde{d}-2} c_i\BCint_S^R\tilde{\omega}_i + \sum_{j=1}^k d_j\BCint_S^R\tilde{\nu}_j.\]
If the polynomial $g(x)$ is of degree at most $2$, the curve $\tilde{X}$ can be parameterized and hence $\eta$ can be integrated easily. Otherwise, we compute the integrals $\lBCint \tilde{\omega}_i$ (resp. $\lBCint \tilde{\nu}_j$) as described in \cite{BBKExplicit,balakrishnan2015coleman} (resp. \cite{BBComputing,balakrishnan2016quadratic}). Beware however that these papers have certain restrictions on the base field $\bK$ and the parity of the degree $\tilde{d}$. See \cite[Section~5.2]{katz2020padic} for a summary of integration algorithms as well as for the mentioned restrictions.

We record, for later use, the following proposition; it gives a bound for the absolute value of $\lBCint_S^R\eta$ in a special case.

\begin{prop}
\label{boundintegralofeta}
Suppose that $R$ and $S$ lie in finite non-Weierstrass residue discs. Suppose further that these discs are different from any of the residue discs of $\beta_1,\dots,\beta_k$. Then the value $\left\|\lBCint_S^R\eta\right\|_p$ is at most the maximum of the following:
\begin{enumerate}
    \item $\max_{i=1,\dots,k}\left(p^{n_i/(p-1)}\right)$,
    \item $\max_{i=k+1,\dots,\ell}\left(n_ip^{1+n_i/(p-1)}\right)$,
    \item $\tilde{d}(\tilde{d}+n_\infty)p^{2+n_\infty/(p-1)}$,
    \item \label{dominatingpart} $\max_{i=1,\dots,\ell}(n_i,\tilde{d}(\tilde{d}+n_\infty)) p^{2+\max_{i=1,\dots,\ell}(n_i/(p-1),n_\infty/(p-1))} \max_{i}\left(\left\|\lBCint_S^R\tilde{\omega}_i\right\|_p\right)$,
    \item $\max_{j}\left(\left\|\lBCint_S^R\tilde{\nu}_j\right\|_p\right)$.
\end{enumerate}

\end{prop}

\begin{proof} 
Let $P$ be one of the points $R,S$.
Thanks to \eqref{boundsoncoefficientsc} and \eqref{boundsoncoefficientsd} in Proposition~\ref{boundsoncoefficients}, it suffices to show that
\[\left\|F(P)\right\|_p\leq\max\left(\max_{i=1,\dots,k}\left(p^{n_i/(p-1)}\right), \max_{i=k+1,\dots,\ell}\left(n_ip^{1+n_i/(p-1)}\right), \tilde{d}(\tilde{d}+n_\infty)p^{2+n_\infty/(p-1)}\right).\]
But this follows from \eqref{boundsoncoefficientsF} in Proposition~\ref{boundsoncoefficients} together with the observation that our assumptions imply that
\[\left\|\frac{\tilde{y}(P)}{x(P)-\beta_i}\right\|_p = 1\]
for each $i=1,\dots,\ell$.
\end{proof}

\subsection{Error Bounds} 
\label{Error Bounds}
Let $R$ and $S$ be two points in $Z(\C_p)$ at which the form $\omega_i(x,\tilde{y})$ is regular. The equality \eqref{intsum=sumint} gives 
\[\BCint_{S}^{R}\omega_i(x,\tilde{y}) =\sum_{(n_1,\dots,n_{\ell},n_\infty)\in I}\left(B_{n_1,\dots,n_{\ell},n_\infty}\BCint_{S}^{R} x^{n_{\infty}}\prod_{j=1}^{\ell}\frac{1}{(x-\beta_j)^{n_j}}\frac{dx}{2\tilde{y}}\right)\]
where
\[I = \{(n_1,\dots,n_{\ell},n_\infty)\in\Z^{\ell+1} \mid n_1\geq L_1,\dots,n_{\ell}\geq L_{\ell}, n_\infty\geq i\}.\]
This is an infinite sum, but in practice, we choose a sufficiently large positive integer $n$ and compute only the finite sum
\[\sum_{(n_1,\dots,n_{\ell},n_\infty)\in I_{<n}}\left(B_{n_1,\dots,n_{\ell},n_\infty}\BCint_{S}^{R} x^{n_{\infty}}\prod_{j=1}^{\ell}\frac{1}{(x-\beta_j)^{n_j}}\frac{dx}{2\tilde{y}}\right),\]
where
\[I_{<n} = \{(n_1,\dots,n_{\ell},n_\infty)\in I \mid (n_\infty+\textstyle\sum_{j=1}^{\ell} n_j) - (i + \textstyle\sum_{j=1}^{\ell} L_j) < n\}.\]
In this subsection, we bound the error introduced by omitting other terms; this gives an idea how quickly the resulting integral converges to the required integral.

We only discuss an important special case, namely the case where the endpoints $R$ and $S$ lie in finite non-Weierstrass residue discs; the other cases can be treated similarly. Recall that $e$ is the ramification degree of $\bK$ over $\Q_p$.

\begin{prop} 
Suppose that $R$ and $S$ lie in finite non-Weierstrass residue discs, and that their discs are distinct from any of the discs of $\beta_1,\dots,\beta_k$. Suppose further that $e<p-1$ and set $r = \frac{1}{e} - \frac{1}{p-1}$. Then there exists a constant $D$ such that for every sufficiently large positive integer $n$, the error term
\[\sum_{(n_1,\dots,n_{\ell},n_\infty)\in I_{\geq n}}\left(B_{n_1,\dots,n_{\ell},n_\infty}\BCint_{S}^{R} x^{n_{\infty}}\prod_{j=1}^{\ell}\frac{1}{(x-\beta_j)^{n_j}}\frac{dx}{2\tilde{y}}\right),\]
where 
\[I_{\geq n} = \{(n_1,\dots,n_{\ell},n_\infty)\in I \mid (n_\infty+\textstyle\sum_{j=1}^{\ell} n_j) - (i + \textstyle\sum_{j=1}^{\ell} L_j) \geq n\},\]
has norm at most
\[Dp^{(-r/2)n}.\]
\end{prop}

\begin{proof}
Let $(n_1,\dots,n_{\ell},n_\infty)\in I_{\geq n}$ and set 
\[B = B_{n_1,\dots,n_{\ell},n_{\infty}},\ \ \ \eta = \eta_{n_1,\dots,n_{\ell},n_{\infty}} = x^{n_{\infty}}\prod_{j=1}^{\ell}\frac{1}{(x-\beta_j)^{n_j}}\frac{dx}{2\tilde{y}}.\]
We need to bound $\left\|B\lBCint_S^R\eta\right\|_p$ in terms of $n$. Write $N = n_\infty+\sum_{j=1}^{\ell} n_j$ and $L = i + \sum_{j=1}^{\ell} L_j$; then $N$ increases as $n$ increases, since $N\geq L + n$.

By Proposition~\ref{BoundsForB's}, there is a constant $C$ such that
\[\|B\|_p\leq C p^{L/e}p^{-N/e}.\]
According to Proposition~\ref{boundintegralofeta}, we have
\[\left\|\BCint_S^R\eta\right\|_p\leq \max_{i=1,\dots,\ell}(n_i,\tilde{d}(\tilde{d}+n_\infty)) p^{2+\max_{i=1,\dots,\ell}(n_i/(p-1),n_\infty/(p-1))} \textstyle \max_{i}\left(\left\|\lBCint_S^R\tilde{\omega}_i\right\|_p\right)\]
for sufficiently large $n$. The inequalities
\begin{align*}
p^{2+\max_{i=1,\dots,\ell}(n_i/(p-1),n_\infty/(p-1))} &\leq p^2p^{N/(p-1)} \\ 
\max_{i=1,\dots,\ell}(n_i,\tilde{d}(\tilde{d}+n_\infty)) &\leq \tilde{d}^2N 
\end{align*} 
hold by definition of $N$. On the other hand, the inequality
\[N \leq p^{(r/2)N}\]
also holds when $n$ is sufficiently large. Combining these inequalities gives
\begin{align*}
\left\|B\BCint_S^R\eta\right\|_p &\leq C\tilde{d}^2p^{2+L/e}\textstyle \max_{i}\left(\left\|\lBCint_S^R\tilde{\omega}_i\right\|_p\right)p^{(-r/2)N}\\
 &\leq C\tilde{d}^2p^{2 + L/e - (r/2)L}\textstyle \max_{i}\left(\left\|\lBCint_S^R\tilde{\omega}_i\right\|_p\right)p^{(-r/2)n}
\end{align*}
for sufficiently large $n$; the proposition follows.
\end{proof}

Two remarks are in order. We keep the notation introduced so far.

\begin{rem}
Let $n$ be any positive integer with the following property: for each tuple $(n_1,\dots,n_{\ell},n_\infty)$ in $I_{\geq n}$, 
\begin{enumerate}
    \item the term \eqref{dominatingpart} in Proposition~\ref{boundintegralofeta} dominates the others, and
    \item we have $N \leq p^{(r/2)N}$ (recall that $N = n_\infty+\sum_{j=1}^{\ell} n_j$).
\end{enumerate}
Then $n$ is sufficiently large for our purposes.
\end{rem}

\begin{rem}
By construction, we have 
\[D = C\tilde{d}^2p^{2 + L/e - (r/2)L}\max_{i}\left(\left\|\BCint_S^R\tilde{\omega}_i\right\|_p\right).\]
One can try to make the constant $C$ (and therefore $D$) explicit by analyzing its construction more closely.
\end{rem}

\section{The Algorithm}
\label{TheAlgorithm}

In this section, we use the material from the previous sections to give an algorithm for computing Vologodsky integrals on hyperelliptic curves.

Let $X$ be a hyperelliptic curve given by $y^2 = f(x)$ for some monic polynomial $f(x)$ with coefficients in $\mathcal{O}_{\bK}$. Let $\cD$ be the semistable covering of $X^\an$ constructed in Section~\ref{HyperellipticCoverings} and let $\Gamma$ be its dual graph. As before, we write $\tau\colon X^\an\to\Gamma$ for the retraction map by identifying $\Gamma$ with the skeleton of $X^\an$.

\smallskip

\begin{algorithm}[H] \label{integrationalgorithm}
\caption{\bf Computing Vologodsky integrals}
\KwIn{
\begin{itemize}
    \item A meromorphic $1$-form $\omega$ on $X$.
    \item Points $x,y\in X(\bK)$.
\end{itemize}
}
\KwOut{The integral $\lVint_x^y \omega$.}

\begin{enumerate}
    \item Pick a path $\gamma$ in $X^\an$ from $x$ to $y$ and compute the Berkovich--Coleman integral
    \[\lBCint_{\gamma} \omega\] 
    as in Section~\ref{BC-int-hypcur}.
    \item Determine a basis $C_1,\dots,C_h$ of $H_1(\Gamma;\C_p)$ and a basis $\eta_1,\dots,\eta_h$ of $\Omega^1_{\trop}(\Gamma;\C_p)$ such that 
    \[\tint_{C_i} \eta_j=
    \begin{cases}
    1 & \text{if }\ i=j,  \\
    0 & \text{if }\ i\neq j.
    \end{cases}\]
    \item For each $i$, pick a loop $\gamma_i$ in $X^{\an}$ with the property that $\tau(\gamma_i)=C_i$; and compute the Berkovich--Coleman periods
    \[\BCint_{\gamma_i} \omega,\ \ \ i=1,\dots,h\]
    as in Section~\ref{BC-int-hypcur}.
    \item Compute the tropical integrals 
    \[\tint_{\tau(\gamma)} \eta_i,\ \ \ i=1,\dots,h.\]
    \item Return
    \[\Vint_x^y \omega = \BCint_{\gamma}\omega-\sum_{i=1}^h \left(\BCint_{\gamma_i} \omega\right)\left(\tint_{\tau(\gamma)} \eta_i\right).\]
\end{enumerate} 
\end{algorithm}

\smallskip

In practice, computing tropical integrals is quite easy. It might happen that $\ltint_{\tau(\gamma)} \eta_i = 0$ for some $i$, in which case there is no need to compute $\lBCint_{\gamma_i} \omega$ of course. 

\section{Numerical Examples}
\label{NumericalExamples}

In this final section, we provide two examples computed in Sage \cite{sagemath}, with assistance from Magma \cite{MR1484478}; see also the examples in \cite[Section~9]{katz2020padic}. Let us start with the following remarks.

\begin{itemize}
    \item \emph{Sage restriction.} A Vologodsky integral on a hyperelliptic curve $X/\Q_p$ with endpoints in $X(\Q_p)$ is an element of $\Q_p$. In our approach, we express such an integral as a sum of Berkovich--Coleman integrals, each of which lies in a possibly different finite extension. Indeed, taking square roots might force us to work with unramified extensions and reference points corresponding to edges might lie in ramified extensions. In Sage, one can define unramified extensions and Eisenstein extensions individually, however, conversion between these extensions has not been implemented yet. To deal with this restriction, all computations will take place in a single extension in each of our examples.
    \item \emph{Branch of logarithm.} As discussed in Section~\ref{integrationsection}, the Vologodsky and Berkovich--Coleman integrals require a branch of the $p$-adic logarithm. We choose the branch that takes the value $0$ at $p$.
\end{itemize}

In the examples below, $\omega_i$ will denote the differential $x^i\frac{dx}{2y}$ on the corresponding hyperelliptic curve.

\begin{eg} \label{ex1}
Consider the elliptic curve $X/\Q$ \cite[\href{https://www.lmfdb.org/EllipticCurve/Q/6622/i/3}{6622.i3}]{lmfdb} given by 
\[y^2 = f(x) = x^3-1351755x+555015942\]
which has split multiplicative reduction at the prime $p = 43$. In this example, we will compute the $p$-part of the (extended) Coleman--Gross $p$-adic height pairing on $X$, which is given in terms of a Vologodsky integral. We first review (a very simplified version of) the definition, referring the reader to  the beginning of \cite[Section~2]{BesserPairing} for details.

Let $P$ and $R$ be points of $X$ such that $P,-P,R,-R$ are pairwise distinct. The Coleman--Gross $p$-adic height pairing is, by definition, a sum of local terms
\[h(P,R) = \sum_v h_v(P,R)\]
over all prime numbers $v$. The local components away from $p$ are described using arithmetic intersection theory, see \cite[(1.3)]{coleman1989} for a more precise formulation. On the other hand, if $\Psi$ is the map to $H_{\dR}^1(X/\Q_p)$ defined in \cite[Proposition~2.5]{coleman1989}, then there exists a unique form $\omega_P$ of the third kind such that
\[\Res(\omega_P) = (P)-(-P),\ \ \ \Psi(\omega_P)\in \langle [\omega_1] \rangle\footnote{In fact, we can replace $\langle [\omega_1] \rangle$ by any subspace $W\subset H_{\dR}^1(X/\Q_p)$ complementary to the space of holomorphic forms. Our choice is quite far from being \emph{canonical}.}\]
and the local height pairing at $p$ is defined via
\[h_p(P,R) = \Vint_{-R}^{R} \omega_P.\]

The local heights away from $p$ behave in much the same way as local archimedean heights and can be computed as explained in \cite[Section~3.1]{balakrishnan2016p}; see \cite{holmes2012computing,muller2014computing,van2020explicit} for a detailed account. On the other hand, one can check, using the reformulation of $\Psi$ in \cite[Section~3]{besser2005p} in terms of \emph{local} and \emph{global indices}, that the form $\omega_P$ is nothing but
\[\frac{y(P)}{x-x(P)}\frac{dx}{y} - \left(\Vint_{P}^{-P} \omega_1\right)\omega_0\]
which gives
\[h_p(P,R) = \Vint_{-R}^{R} \frac{y(P)}{x-x(P)}\frac{dx}{y} + \Vint_{-R}^{R} \omega_0\Vint_{-P}^{P} \omega_1.\]
Our techniques allow us to compute the integrals on the right-hand side, hence the local component at $p$.

As a concrete example, let $P=(-501,33264), R=(219,16416)$. The polynomial $f(x)$ factors as
\[f(x) = (x-507)(x-\beta_{+})(x-\beta_{-}),\ \ \ \beta_{\pm} = -\frac{3}{2}(169\pm 33\sqrt{473})\]
and the set $\{U_1,U_2\}$ is a semistable covering of $\P^{1,\an}$ that is good with respect to $S_f=\{507,\beta_{\pm},\infty\}$ where 
\[U_1 = \P^{1,\an}\setminus\overline{B}(26,1/\sqrt{43}),\ \ \ U_2 = B(26,1).\] 
The corresponding dual graph is as follows:
\[\begin{tikzpicture}
\draw [thick] (0,0) -- (2,0);
\filldraw (0,0) circle (2.5pt);
\filldraw (0,0.4) node{$U_1$};
\filldraw (2,0) circle (2.5pt);
\filldraw (2,0.4) node{$U_2$};
\draw (0,0) -- (-0.4,-0.5);
\draw (0,0) -- (0.4,-0.5);
\draw (2,0) -- (1.6,-0.5);
\draw (2,0) -- (2.4,-0.5);
\filldraw (-0.5,-0.75) node{\tiny{$507$}};
\filldraw (0.5,-0.75) node{\tiny{$\infty$}};
\filldraw (1.55,-0.75) node{\tiny{$\beta_{+}$}};
\filldraw (2.55,-0.75) node{\tiny{$\beta_{-}$}};
\end{tikzpicture}\]
Both $P$ and $R$ lie in the component $\pi^{-1}(U_1)$, which is embedded into a rational curve:
\[\pi^{-1}(U_1) \simeq \{(x,\tilde{y})\mid\tilde{y}^2=x-507,\ x\in U_1\}\] 
where 
\[\tilde{y}=\frac{y}{\ell(x)},\ \ \ \ell(x) = \Big(x+\frac{507}{2}\Big)\Big(1-\frac{4635873/4}{(x+507/2)^2}\Big)^{1/2}.\] 
Our computations give
\begin{align*}
\Vint_{-R}^{R} \frac{y(P)}{x-x(P)}\frac{dx}{y} &= 29 \cdot 43 + 29 \cdot 43^2 + 18 \cdot 43^3 + 29 \cdot 43^4 + 3 \cdot 43^5 + O(43^6),\\
\Vint_{-R}^{R} \omega_0 &= 12 \cdot 43^2 + 43^3 + 18 \cdot 43^4 + 40 \cdot 43^5 + O(43^6),\\
\Vint_{-P}^{P} \omega_1 &= 25 + 11 \cdot 43 + 34 \cdot 43^2 + 26 \cdot 43^3 + 25 \cdot 43^4 + 34 \cdot 43^5 + O(43^6),
\end{align*}
from which we get
\[h_p(P,R) = 29 \cdot 43 + 28 \cdot 43^2 + 10 \cdot 43^3 + 42 \cdot 43^4 + 19 \cdot 43^5 + O(43^6).\]

Note that the subspace spanned by $[\omega_1]$ of $H_{\dR}^1(X/\Q_p)$ is isotropic with respect to the cup product pairing; hence the local height pairing at $p$ must be symmetric. As a consistency check, we also compute $h_p(R,P)$. In this case, we have
\begin{align*}
\Vint_{-P}^{P} \frac{y(R)}{x-x(R)}\frac{dx}{y} &= 29 \cdot 43 + 21 \cdot 43^2 + 35 \cdot 43^3 + 20 \cdot 43^4 + 10 \cdot 43^5 + O(43^6),\\
\Vint_{-P}^{P} \omega_0 &= 12 \cdot 43^2 + 43^3 + 18 \cdot 43^4 + 40 \cdot 43^5 + O(43^6),\\
\Vint_{-R}^{R} \omega_1 &= 40 + 8 \cdot 43 + 34 \cdot 43^2 + 26 \cdot 43^3 + 25 \cdot 43^4 + 34 \cdot 43^5 + O(43^6),
\end{align*}
which give
\[h_p(R,P) = 29 \cdot 43 + 28 \cdot 43^2 + 10 \cdot 43^3 + 42 \cdot 43^4 + 19 \cdot 43^5 + O(43^6),\]
demonstrating the symmetry of $h_p$.

We end this example with another consistency check concerning the \emph{global} picture. An important feature of the Coleman--Gross $p$-adic height pairing is that it vanishes if one of the points is torsion. In order to observe this numerically, let $P=(379,9856)$ and $R = (-501,33264)$. We compute the $p$-part as
\[h_p(P,R) = 43 + 21 \cdot 43^{2} + 28 \cdot 43^{3} + 25 \cdot 43^{4} + 3 \cdot 43^{5} + O(43^{6}).\]
Away from $p$, using the Magma implementation of the algorithm developed in van Bommel--Holmes--M{\"u}ller \cite{van2020explicit}, we have
\begin{align*}
\sum_{v\neq p} h_v(P,R) &= -\frac{2}{3}\Log(2) + 2\Log(5) - \frac{2}{3}\Log(11)\\
&= 42 \cdot 43 + 21 \cdot 43^{2} + 14 \cdot 43^{3} + 17 \cdot 43^{4} + 39 \cdot 43^{5} + O(43^{6}).
\end{align*}
Combining these two results, we get
\[h(P,R) = O(43^{6})\]
which is consistent with the fact that $R$ is torsion.
\end{eg}

\begin{eg} \label{ex2} Consider the hyperelliptic curve $X/\Q$ \cite[\href{https://www.lmfdb.org/Genus2Curve/Q/3950/b/39500/1}{3950.b.39500.1}]{lmfdb} given by
\[y^2 = f(x) = (x^2-x-1)(x^4+x^3-6x^2+5x-5).\]
According to the database, the Mordell-Weil group of the Jacobian of $X$ over $\Q$ is isomorphic to $\Z/12\Z$. Therefore, for any two points $R,S\in X(\Q)$, the Vologodsky integrals of holomorphic forms against the divisor $(R)-(S)$ must vanish. In this example, we will compute the integrals of $\omega_0,\omega_1,\omega_2,\omega_3$ and $\omega_4$ between two rational points and observe this vanishing numerically.

Note that $p=5$ is a prime of bad reduction for $X$. Moreover, the corresponding (stable) reduction is a genus $2$ \emph{banana} curve, i.e., the union of two projective lines meeting transversally at three points, as represented in the following figure:
\[\begin{tikzpicture}
\draw[thick] (0,-0.4) to[bend left] (4,0);
\draw[thick] (4,0) to[bend right] (8,0.4);
\draw[thick] (0,0.4) to[bend right] (4,0);
\draw[thick] (4,0) to[bend left] (8,-0.4);
\filldraw (0.67,0) circle (2pt);
\filldraw (4,0) circle (2pt);
\filldraw (7.35,0) circle (2pt);
\filldraw (0.67,-0.6) node{$(0,0)$};
\filldraw (4,-0.6) node{$(2,0)$};
\filldraw (7.35,-0.6) node{$(3,0)$};
\end{tikzpicture}\]
Hereafter, we consider $X$ over the field $\Q_5$. Then the polynomial $f(x)$ factors as the product of three quadratic monic polynomials:
\[f(x) = f_1(x)f_2(x)f_3(x),\ \ \ f_j(x) =x^2+A_jx+B_j\in\Q_5[x].\]
Relabeling if necessary, we may assume that
\[f_1(x)\equiv x^2,\ \ \ f_2(x)\equiv (x-2)^2,\ \ \ f_3(x)\equiv (x-3)^2\ (\operatorname{mod} 5).\]

We begin by constructing coverings. The set $\cC = \{U,U_1,U_2,U_3\}$ is a semistable covering of $\P^{1,\an}$ that is good with respect to the roots of $f(x)$ where
\begin{align*}
U\ &= \P^{1,\an}\setminus\big(\overline{B}(0,1/\sqrt{5})\cup \overline{B}(2,1/\sqrt{5})\cup \overline{B}(3,1/\sqrt{5})\big), \\
U_1 &= B(0,1), \\
U_2 &= B(2,1), \\
U_3 &= B(3,1).
\end{align*}
Set
\[\ell_j(x) = \Big(x+\frac{A_j}{2}\Big)\left(1+\frac{B_j-A_j^2/4}{(x+A_j/2)^2}\right)^{1/2},\ \ \ j=1,2,3.\]
Then $\ell_j(x)^2 = f_j(x)$ for $x$ in the domain of convergence. For the first component, we have
\[\pi^{-1}(U) \simeq \{(x,\tilde{y})\mid\tilde{y}^2=1,\ x\in U\} = \{(x,\pm 1)\mid x\in U\}\] 
where 
\[\tilde{y}=\frac{y}{\ell(x)},\ \ \ \ell(x) = \ell_1(x)\ell_2(x)\ell_3(x).\]
Let $v_+$ (resp. $v_-$) denote the component of $\pi^{-1}(U)$ corresponding to $\{(x,1)\mid x\in U\}$ (resp. $\{(x,-1)\mid x\in U\}$). For the other components, we have
\[v_j\coloneq \pi^{-1}(U_j) \simeq \{(x,\tilde{y})\mid\tilde{y}^2=f_j(x),\ x\in U_j\},\ \ \ j=1,2,3\] 
where 
\[\tilde{y}=\frac{y}{\ell(x)},\ \ \ \ell(x) =
\begin{cases}
\ell_2(x)\ell_3(x) &\text{if } j=1,\\
\ell_1(x)\ell_3(x) &\text{if } j=2,\\
\ell_1(x)\ell_2(x) &\text{if } j=3.\\
\end{cases}\] 
Consequently, $\cD = \{v_{\pm},v_1,v_2,v_3\}$ forms a semistable covering of $X^{\an}$ that is good with respect to the set of Weierstrass points.
Here are the dual graphs $\Gamma$ and $T$:
\[\begin{tikzpicture}
\draw [thick] (-4.5,1.6) -- (-6.5,0);
\draw [thick] (-4.5,1.6) -- (-4.5,0);
\draw [thick] (-4.5,1.6) -- (-2.5,0);
\filldraw (-4.5,1.6) circle (2pt);
\filldraw (-6.5,0) circle (2pt);
\filldraw (-4.5,0) circle (2pt);
\filldraw (-2.5,0) circle (2pt);
\filldraw (-4.5,-1.6) circle (2pt);
\draw [thick] (-4.5,-1.6) -- (-6.5,0);
\draw [thick] (-4.5,-1.6) -- (-4.5,0);
\draw [thick] (-4.5,-1.6) -- (-2.5,0);
\filldraw (-4.45,1.87) node{$v_+$};
\filldraw (-6.85,0) node{$v_1$};
\filldraw (-4.85,0) node{$v_2$};
\filldraw (-2.15,0) node{$v_3$};
\filldraw (-4.45,-1.93) node{$v_-$};
\node [thick,rotate=145] at (-5.5,-0.8) {>};
\node [thick,rotate=35] at (-3.5,-0.8) {>};
\node [thick,rotate=35] at (-5.5,0.8) {>};
\node [thick,rotate=145] at (-3.5,0.8) {>};
\node [thick,rotate=270] at (-4.5,0.8) {>};
\node [thick,rotate=270] at (-4.5,-0.8) {>};
\filldraw (-5.68,-1.08) node{$e_1$};
\filldraw (-5.68,1.08) node{$e_2$};
\filldraw (-4.16,0.8) node{$e_3$};
\filldraw (-4.16,-0.8) node{$e_4$};
\filldraw (-3.32,-1.08) node{$e_5$};
\filldraw (-3.32,1.08) node{$e_6$};

\qquad

\filldraw (2,0) circle (2pt);
\filldraw (3,1) circle (2pt);
\filldraw (1,1) circle (2pt);
\filldraw (2,-1.2) circle (2pt);
\draw [thick] (2,0) -- (3,1);
\draw [thick] (2,0) -- (1,1);
\draw [thick] (2,0) -- (2,-1.2);
\draw (2,-1.2) -- (1.6,-1.5);
\draw (2,-1.2) -- (2.4,-1.5);
\draw (1,1) -- (0.5,1);
\draw (1,1) -- (1,1.5);
\draw (3,1) -- (3.5,1);
\draw (3,1) -- (3,1.5);
\filldraw (2.3,-0.2) node{$U$};
\filldraw (0.95,0.6) node{$U_1$};
\filldraw (3.2,0.6) node{$U_2$};
\filldraw (1.65,-1) node{$U_3$};
\filldraw (0.2,1) node{\tiny{$\beta_{1,+}$}};
\filldraw (1,1.7) node{\tiny{$\beta_{1,-}$}};
\filldraw (3,1.7) node{\tiny{$\beta_{2,+}$}};
\filldraw (3.85,1) node{\tiny{$\beta_{2,-}$}};
\filldraw (1.5,-1.7) node{\tiny{$\beta_{3,+}$}};
\filldraw (2.7,-1.7) node{\tiny{$\beta_{3,-}$}};
\end{tikzpicture}\]
where $\beta_{j,\pm}$ denote the roots of $f_j$. Let $\tau\colon X^\an\to\Gamma$ be the retraction map.

Returning now to integrals, let $R=(1,2)$, $S=(1,-2)$ be points on $X$. We first compute integrals along a path from $S$ to $R$. Clearly, the points $R$ and $S$ belong, respectively, to the components $v_+$ and $v_-$. In order to pass from $v_-$ to $v_+$ (via $v_1$), we pick the following reference points:
\begin{align*}
P_{e_1} &= (a,4\cdot a + a^3 + 2\cdot a^5 + 4\cdot a^6 + a^7 + O(a^8)), \\
P_{e_2} &= (a,a + 4\cdot a^3 + 2\cdot a^5 + a^6 + 3\cdot a^7 + O(a^8)),
\end{align*}
where $a^4 = 5$. Now let $\gamma=\gamma^1\gamma^2\gamma^3$ where 
\begin{itemize}
    \item $\gamma^1$ is a path from $S$ to $P_{e_1}$ in $v_-$,
    \item $\gamma^2$ is a path from $P_{e_1}$ to $P_{e_2}$ in $v_1$, and
    \item $\gamma^3$ is a path from $P_{e_2}$ to $R$ in $v_+$,
\end{itemize}
so that $\tau(\gamma) = e_1e_2$. Our computations give
\[\BCint_\gamma \omega_i =  \begin{dcases}
\omit\hfil$2 \cdot a^{4} + 3 \cdot a^{8} + 4 \cdot a^{12} + 2 \cdot a^{16} + a^{20} + 2 \cdot a^{24} + O(a^{32})$\hfil &\text{if } i=0,\\
\omit\hfil$a^{4} + a^{8} + a^{12} + a^{24} + a^{28} + O(a^{32})$\hfil &\text{if } i=1,\\
\omit\hfil$a^{4} + 2 \cdot a^{24} + O(a^{32})$\hfil &\text{if } i=2,\\
\omit\hfil$1 + 3 \cdot a^{4} + 3 \cdot a^{8} + 2 \cdot a^{12} + 4 \cdot a^{16} + a^{20} + O(a^{32})$\hfil &\text{if } i=3,\\
\omit\hfil$3 + 4 \cdot a^{4} + 2 \cdot a^{8} + 4 \cdot a^{12} + 2 \cdot a^{16} + 2 \cdot a^{20} + a^{24} + 3 \cdot a^{28} + O(a^{32})$\hfil &\text{if } i=4.
\end{dcases}\]

Now, we compute the period integrals. The $1$-cycles
\[C_1 = e_1+e_2+e_3+e_4,\ \ \ C_2 = e_3+e_4+e_5+e_6\]
are a basis for $H_1(\Gamma;\C_p)$, and the tropical $1$-forms
\[\eta_1(e_i) =
\begin{dcases}
\omit\hfil$1/3$\hfil &\text{if } i=1\text{ or }2,\\
\omit\hfil$1/6$\hfil &\text{if } i=3\text{ or }4,\\
\omit\hfil$-1/6$\hfil &\text{if } i=5\text{ or }6;\\
\end{dcases}\ \ \ 
\eta_2(e_i) =
\begin{dcases}
\omit\hfil$-1/6$\hfil &\text{if } i=1\text{ or }2,\\
\omit\hfil$1/6$\hfil &\text{if } i=3\text{ or }4,\\
\omit\hfil$1/3$\hfil &\text{if } i=5\text{ or }6;\\
\end{dcases}\]
are a basis for $\Omega^1_{\trop}(\Gamma;\C_p)$ so that \[\tint_{C_i} \eta_j=
\begin{cases}
1 & \text{if }\ i=j,  \\
0 & \text{if }\ i\neq j.
\end{cases}\]
For each of the remaining edges, we pick the following reference points:
\begin{align*}
P_{e_3} &= (a+2,3\cdot a + a^2 + 2\cdot a^3 + a^4 + 4\cdot a^5 + O(a^7)), \\
P_{e_4} &= (a+2,2\cdot a + 4\cdot a^2 + 3\cdot a^3 + 4\cdot a^4 + a^6 + O(a^7)), \\
P_{e_5} &= (a+3,2\cdot a + a^2 + 4\cdot a^4 + 4\cdot a^5 + 3\cdot a^6 + O(a^7)), \\
P_{e_6} &= (a+3,3\cdot a + 4\cdot a^2 + a^4 + a^6 + a^7 + O(a^8)),
\end{align*}
recalling $a^4 = 5$. Let $\gamma_1=\gamma_1^1\gamma_1^2\gamma_1^3\gamma_1^4$ where
\begin{itemize}
    \item $\gamma_1^1$ is a path from $P_{e_1}$ to $P_{e_2}$ in $v_1$,
    \item $\gamma_1^2$ is a path from $P_{e_2}$ to $P_{e_3}$ in $v_+$,
    \item $\gamma_1^3$ is a path from $P_{e_3}$ to $P_{e_4}$ in $v_2$, and
    \item $\gamma_1^4$ is a path from $P_{e_4}$ to $P_{e_1}$ in $v_-$.
\end{itemize}
Then, by construction, $\gamma_1$ is a loop in $X^\an$ such that $\tau(\gamma_1)=C_1$ and the corresponding period integrals are
\[\BCint_{\gamma_1} \omega_i = \begin{dcases}
\omit\hfil$a^{8} + 3 \cdot a^{16} + a^{20} + O(a^{32})$\hfil &\text{if } i=0,\\
\omit\hfil$2 \cdot a^{4} + a^{12} + 3 \cdot a^{24} + 4 \cdot a^{28} + O(a^{32})$\hfil &\text{if } i=1,\\
\omit\hfil$a^{12} + 4 \cdot a^{16} + 3 \cdot a^{28} + O(a^{32})$\hfil &\text{if } i=2,\\
\omit\hfil$2 + 3 \cdot a^{4} + 2 \cdot a^{8} + 4 \cdot a^{16} + 2 \cdot a^{20} + a^{24} + a^{28} + O(a^{32})$\hfil &\text{if } i=3,\\
\omit\hfil$2 + 3 \cdot a^{4} + a^{8} + 2 \cdot a^{12} + 2 \cdot a^{16} + 4 \cdot a^{20} + 4 \cdot a^{24} + 4 \cdot a^{28} + O(a^{32})$\hfil &\text{if } i=4.
\end{dcases}\]

By constructing $\gamma_2$ analogously, we have
\[\BCint_{\gamma_2} \omega_i = \begin{dcases}
\omit\hfil$4 \cdot a^{4} + a^{8} + a^{12} + 2 \cdot a^{16} + 3 \cdot a^{20} + 3 \cdot a^{24} + 3 \cdot a^{28} + O(a^{32})$\hfil &\text{if } i=0,\\
\omit\hfil$a^{4} + 2 \cdot a^{8} + 3 \cdot a^{12} + 4 \cdot a^{16} + 4 \cdot a^{20} + 2 \cdot a^{24} + O(a^{32})$\hfil &\text{if } i=1,\\
\omit\hfil$2 \cdot a^{4} + 4 \cdot a^{8} + a^{12} + 3 \cdot a^{16} + a^{20} + 4 \cdot a^{24} + 4 \cdot a^{28} + O(a^{32})$\hfil &\text{if } i=2,\\
\omit\hfil$4 + a^{4} + 4 \cdot a^{8} + 2 \cdot a^{12} + 4 \cdot a^{16} + 4 \cdot a^{20} + a^{24} + 2 \cdot a^{28} + O(a^{32})$\hfil &\text{if } i=3,\\
\omit\hfil$3 \cdot a^{4} + 4 \cdot a^{8} + a^{16} + a^{20} + O(a^{32})$\hfil &\text{if } i=4.
\end{dcases}\]

Finally, using the comparison formula in Theorem~\ref{mainthm}, we compute
\begin{align*}
\Vint_S^R \omega_i &= \BCint_\gamma \omega_i - \left(\BCint_{\gamma_1} \omega_i\right)\left(\tint_{\tau(\gamma)} \eta_1\right) - \left(\BCint_{\gamma_2} \omega_i\right)\left(\tint_{\tau(\gamma)} \eta_2\right) \\
&= O(a^{32}) = O(5^{8}),\ \ \ i=0,1,2,4; \\
\Vint_S^R \omega_3 &= \BCint_\gamma \omega_3 - \left(\BCint_{\gamma_1} \omega_3\right)\left(\tint_{\tau(\gamma)} \eta_1\right) - \left(\BCint_{\gamma_2} \omega_3\right)\left(\tint_{\tau(\gamma)} \eta_2\right) \\
&= 1 + 3 \cdot a^{4} + a^{8} + 3 \cdot a^{12} + a^{16} + 3 \cdot a^{20} + a^{24} + 3 \cdot a^{28} + O(a^{32}) \\
&= 1 + 3 \cdot 5 + 5^2 + 3 \cdot 5^3 + 5^4 + 3 \cdot 5^5 + 5^6 + 3 \cdot 5^7 + O(5^{8})
\end{align*}
consistent with the fact that $\omega_0,\omega_1$ are regular but $\omega_3$ is not.\footnote{Incidentally, the integrals $\lVint_S^R \omega_2$ and $\lVint_S^R \omega_4$ vanish; the author does not know the reason behind this.}

Let $\gamma'$ (resp. $\gamma''$) be a path from $S$ to $R$ such that $\tau(\gamma') = (-e_4)(-e_3)$ (resp. $\tau(\gamma'') = e_5e_6$). As a consistency check, we replaced the path $\gamma$ in the computations above by $\gamma'$ and $\gamma''$, respectively; but these changes did not affect the (final) results, as expected.

\end{eg}

\bibliographystyle{amsalpha}
\bibliography{master}

\vspace{.2in}

\end{document}